\newcommand{\R}{\mathbb{R}}
\newcommand{\C}{\mathbb{C}}
\newcommand{\vx}{\mathbf{x}}
\newcommand{\rmd}{\mathrm{d}}
\newcommand{\Phit}[1]{{\Phi_{#1}^\tau}}
\newcommand{\St}[1]{{\mathcal{S}_{#1}^\tau}}
\newcommand{\psin}[1]{\psi^{#1}}
\newcommand{\psihn}[1]{\psi^{\langle #1 \rangle}}
\newcommand{\figheight}{0.25}
\renewenvironment{proof}[1][\relax]{\par
	\normalfont \topsep6\p@\@plus6\p@\relax
	\trivlist
	\item[\hskip\labelsep\itshape
	\ifx#1\relax \proofname\else\proofname{} of #1\fi\@addpunct{.}]\ignorespaces
}{%
	\popQED\endtrivlist\@endpefalse
}
\begin{document}

	\markboth{W.~Bao, B.~Lin, Y.~Ma and C. Wang}{An extended Fourier pseudospectral method for GPE}
	\title{An extended Fourier pseudospectral method for the Gross-Pitaevskii equation with low regularity potential}
	
	
	\author[W. Bao, B.~Lin, Y.~Ma and C.~Wang]{Weizhu Bao\affil{1}, Bo Lin\affil{1}, Ying Ma\affil{2} and Chushan Wang\affil{1}\comma\corrauth}
	\address{\affilnum{1}\ Department of Mathematics, National University of Singapore, Singapore 119076\\
		\affilnum{2}\ Department of Mathematics, Faculty of Science, Beijing University of Technology, Beijing 100124, China}
	%
	%
	\emails{{\tt matbaowz@nus.edu.sg} (W. Bao), {\tt linbo@u.nus.edu} (B. Lin), {\tt maying@bjut.edu.cn} (Y. Ma), {\tt e0546091@u.nus.edu} (C. Wang)}
	%
	\begin{abstract}
		We propose and analyze an extended Fourier pseudospectral (eFP) method for the spatial discretization of the Gross-Pitaevskii equation (GPE) with low regularity potential by treating the potential in an extended window 
		for its discrete Fourier transform. The proposed eFP method maintains optimal convergence rates with respect to the regularity of the exact solution even if the potential is of low regularity and enjoys similar computational cost as the standard Fourier pseudospectral method, and thus it is both efficient and accurate. Furthermore, similar to the Fourier spectral/pseudospectral methods, the eFP method can be easily coupled with different popular temporal integrators including finite difference methods, time-splitting methods and exponential-type integrators. Numerical results are presented to validate our optimal error estimates and to demonstrate that they are sharp as well as to show its efficiency in practical computations. 
	\end{abstract}
	
	\keywords{Gross-Pitaevskii equation, low regularity potential, extended Fourier pseudospectral method, time-splitting method, optimal error bound }
	
	\ams{35Q55, 65M15, 65M70, 81Q05}
	
	\maketitle
	
	
	\section{Introduction}\label{sec1}
	The Gross-Pitaevskii equation (GPE), as a particular case of the 
	nonlinear Schr\"odinger equation (NLSE) with cubic nonlinearity, 
	is derived from the mean-field approximation of many-body problems in quantum physics and chemistry, which is widely adopted in modeling and simulation of Bose-Einstein condensation (BEC) \cite{review_2013,ESY,NLS}. In this paper, we consider the following time-dependent GPE \cite{review_2013,ESY,NLS}
	\begin{equation}\label{NLSE}
		\left\{
		\begin{aligned}
			&i \partial_t \psi(\vx, t) = -\Delta \psi(\vx, t) + V(\vx) \psi(\vx, t) +  \beta |\psi(\vx, t)|^2 \psi(\vx, t), \quad  \vx \in \Omega, \  t>0, \\
			&\psi(\vx, 0) = \psi_0(\vx), \quad \vx \in \overline{\Omega},
		\end{aligned}
		\right.
	\end{equation}
	where $t$ is time, $\vx\in \R^d$ ($d=1, 2, 3$) is the spatial coordinate, $ \psi:=\psi(\vx, t) $ is a complex-valued wave function, and $ \Omega = \Pi_{i=1}^d (a_i, b_i) \subset \R^d $ is a bounded domain equipped with periodic boundary condition. Here, $ V:=V(\vx)$ is a time-independent real-valued potential and $\beta \in \R$ is a given parameter that characterizes the nonlinear interaction strength.  
	
	Usually, the potential $V$ is a smooth function which is chosen as either the harmonic potential, e.g. $V(\vx) = |\vx|^2/2$, or an optical lattice potential, e.g. $V(\vx) = \sum_{j=1}^d A_j \cos(L_jx_j) $ in $d$-dimensions 
	with $A_j, L_j \ (j=1,\ldots, d)$ being some given real-valued constants.  
	For the GPE \eqref{NLSE} with sufficiently smooth potential, many accurate and efficient temporal discretizations have been proposed and analyzed in last two decades, including the finite difference time domain (FDTD) method \cite{FD,bao2013,review_2013,Ant}, the exponential wave integrator (EWI) \cite{bao2014,ExpInt,SymEWI}, the time-splitting method \cite{bao2003JCP,BBD,lubich2008,review_2013,schratz2016,Ant,splitting_low_reg,RCO_SE,bao2023_semi_smooth}, and the low regularity integrator (LRI) \cite{LRI,LRI_sinum,LRI_error,LRI_JEMS,LRI_general,tree1,tree2,bronsard2023}. Generally, these temporal discretizations are followed by a spatial discretization, such as finite difference methods, finite element methods or Fourier spectral/pseudospectral methods, to obtain a full discretization for the GPE 
	\eqref{NLSE}. Among them, the time-splitting Fourier pseudospectral (TSFP) method is the most popular one due to its efficient implementation and spectral accuracy in space as well as the preservation of many dynamical properties of the GPE \eqref{NLSE} in the fully discretized level \cite{Ant,review_2013}. 
	
	On the contrary, in many physics applications, low regularity potential is also widely 
	incorporated into the GPE. Typical examples include the square-well potential or step potential \cite{poten_step_1,poten_step_2,poten_box}, which are discontinuous;  narrow potential barriers \cite{poten_narrow,poten_narrow2} and power-law potential 
	\cite{poten_power_1,poten_power_2}, which may have large or even unbounded derivatives; and the random potential or disorder potential in the study of Anderson localization \cite{poten_Josephson,poten_anderson}, which could be very rough. Some of them in one dimension (1D) and two dimensions (2D) are plotted in Figure \ref{fig:poten}. 
	
	\begin{figure}[htbp]
		\centering
		{\includegraphics[height=\figheight\textheight]{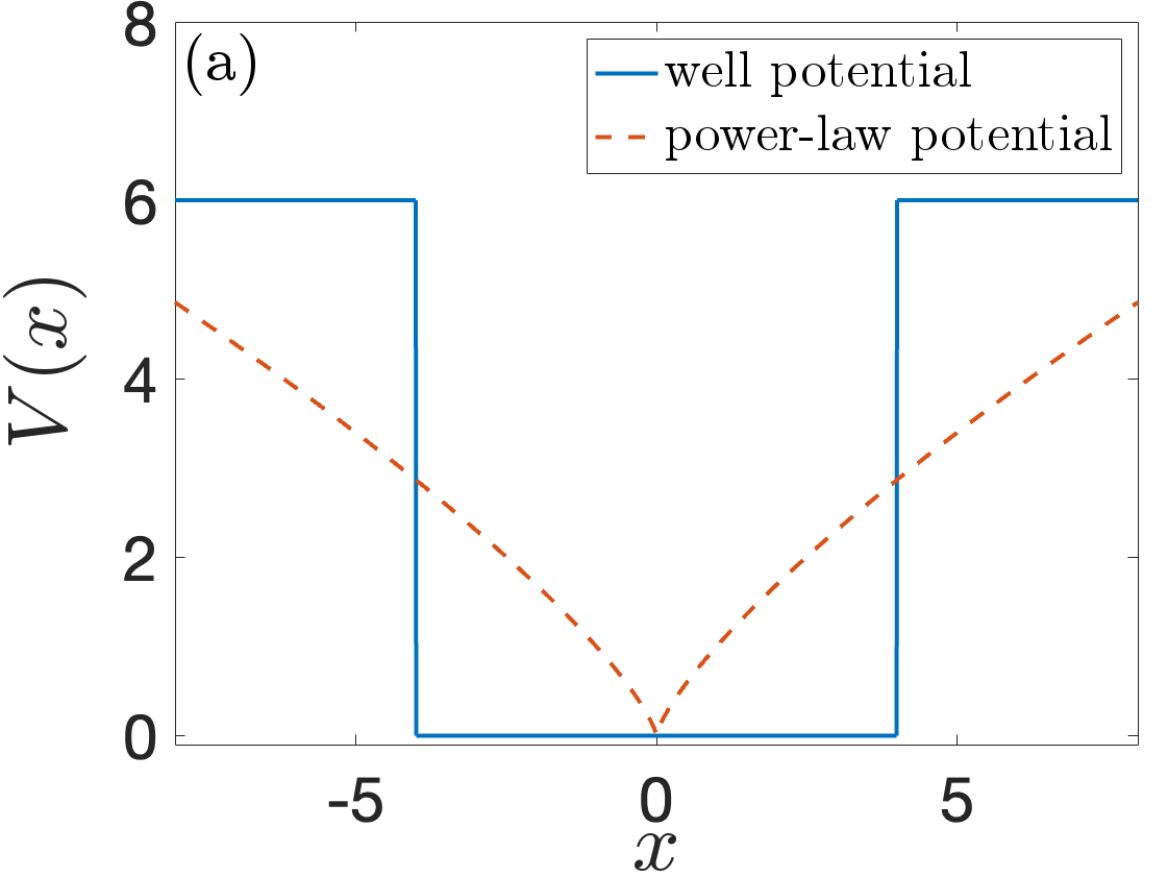}}\hspace{1em}
		{\includegraphics[height=\figheight\textheight]{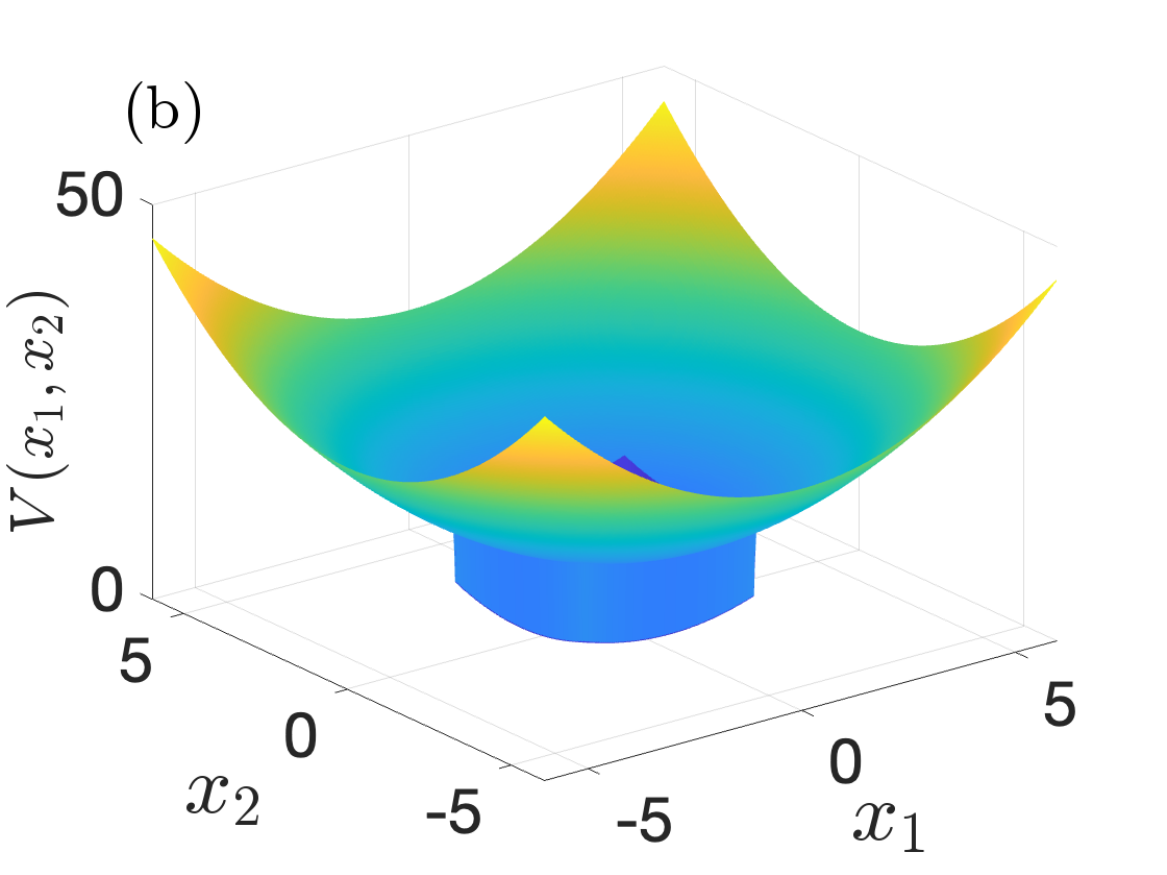}}
		\caption{Examples of low regularity potential: (a) square-well potential and power-law potential with order $0.75$ in 1D, and (b) square-well potential combined with a harmonic potential in 2D.} 
		\label{fig:poten}
	\end{figure}
	
	When considering the GPE with low regularity potential, most of the aforementioned full-discretization 
	methods are still applicable. However, their performance may significantly deviate from the smooth cases, leading to possible order reduction in both time and space. Recently, much attention has been paid to the error analysis of those methods for the GPE with low regularity potential and/or the NLSE with low regularity nonlinearity. For details, we refer to \cite{henning2017} for the FDTD method, \cite{bao2023_EWI} for the EWI, \cite{bao2023_improved,bao2023_semi_smooth,zhao2021} for the time-splitting method, and \cite{bronsard2022,bronsard2023,tree2} for the LRI. Among these methods, under the assumption of $L^\infty$-potential, for a general $H^2$-solution (which is the best situation that one can expect on the regularity of the exact solution due to the low regularity of potential), the optimal $L^2$-norm error bound -- first order in time and second order in space -- can only be proved for the EWI and the time-splitting method with the Fourier spectral (FS) method for spatial discretization \cite{bao2023_EWI,bao2023_improved}.  According to both theoretical and numerical results in \cite{bao2023_EWI,bao2023_improved}, it is essential to use the FS method rather than the Fourier pseudospectral (FP) method to discretize the EWI and time-splitting methods in space in order to get
	optimal spatial convergence. However, the FS method cannot be efficiently implemented in practice due to the difficulty in the exact evaluation of the Fourier integrals, and the usual strategy of using quadrature rules is extremely time-consuming to obtain the required accuracy due to the low regularity of potential. In fact, if the integrals are not approximated with sufficient accuracy, severe order reduction is observed in our numerical experiments. 
	More importantly, when applying time-splitting methods with low regularity potential, sub-optimal spatial convergence will even destroy the optimal convergence order in time since a CFL-type condition must be satisfied (see Section \ref{sec:numerical experiments} and \cite{bao2023_improved} for more details). In other words, the time-splitting Fourier spectral (TSFS) method can achieve optimal convergent rates -- an advantage in accuracy, but it is extremely expensive in practical computation since one needs to evaluate numerically the Fourier coefficients very accurately -- a disadvantage in efficiency. On the contrary, 
	the TSFP method is very efficient due to FFT -- an advantage in efficiency, 
	but it suffers from convergent rate reduction -- a disadvantage in accuracy.  
	
	The main aim of this paper is to propose an extended Fourier pseudospectral (eFP) method for the spatial discretization 
	of the GPE with low regularity potential. The proposed eFP method enjoys the advantages of the optimal 
	convergent rates of the FS method -- accuracy -- and the efficient implementation of the FP method -- efficiency, 
	and at the same time, it avoids the disadvantages of the expensive computational cost of the FS method and 
	the convergent rates reduction of the FP method. In summary, the proposed eFP method has the same accuracy as the FS method and similar computational cost as the FP method for the GPE with low regularity potential, and thus
	it has advantages in both accuracy and efficiency compared to those existing FS and FP methods! 
	In addition, it can be flexibly coupled with different temporal integrators such as FDTD methods, time-splitting methods, and exponential-type integrators. For simplicity, we take the time-splitting methods as a temporal integrator to present the eFP method for the GPE. We rigorously prove the optimal error bounds of the proposed time-splitting extended Fourier pseudospectral (TSeFP) methods for the GPE with low regularity potential. Extensions of the proposed TSeFP method by replacing the time-splitting methods with other temporal integrators are straightforward and their error estimates can be established similarly.   
	
	The key ingredient of the eFP method is to approximate the nonlinearity by Fourier interpolation and to approximate the potential by Fourier projection, both of which maintain the optimal approximation error. 
	Moreover, the Fourier interpolation and projection also filter out high frequencies, which makes it possible to fast compute exact Fourier coefficients of the product terms involving low regularity potential and functions of fixed finite frequency. We remark here that the eFP method can be viewed as an efficient implementation of the FS method in the presence of low regularity potential without any loss of accuracy. Finally, we would also like to mention some related ideas in \cite{bao2023_EWI,zhao_wang}. 
	
	The rest of the paper is organized as follows. In Sections 2 and 3, we present the time-splitting extended Fourier pseudospectral method and prove the optimal error bounds. Numerical results are reported in Section 4 to confirm our error estimates. Finally, some conclusions are drawn in Section 5. Throughout the paper, we adopt standard Sobolev spaces as well as their corresponding norms and denote by $ C $ a generic positive constant independent of the mesh size $ h $ and time step size $ \tau $, and by $ C(\alpha) $ a generic positive constant depending only on the parameter $ \alpha $. The notation $ A \lesssim B $ is used to represent that there exists a generic constant $ C>0 $, such that $ |A| \leq CB $. 
	
	\section{A time-splitting extended Fourier pseudospectral method}
	In this section, we begin with the FS method and then present the extended Fourier pseudospectral (eFP) method to discretize the first-order Lie-Trotter splitting for the GPE with low regularity potential. As mentioned before, one can also combine the eFP method with high-order splitting schemes or other temporal discretizations such as FDTD methods and EWIs. 
	
	
	For simplicity of the presentation and to avoid heavy notations, we only present the numerical schemes in one dimension (1D) and take $ \Omega = (a, b) $. Generalizations to two dimensions (2D) and three dimensions (3D) are straightforward. We define periodic Sobolev spaces as (see, e.g. \cite{bronsard2022}, for the definition in phase space)
	\begin{equation*}
		H_\text{per}^m(\Omega) := \{\phi \in H^m(\Omega) : \phi^{(k)}(a) = \phi^{(k)}(b), \ k=0, \cdots, m-1\}, \quad m \geq 1. 
	\end{equation*}
	
	The operator splitting techniques are based on a decomposition of the flow of \eqref{NLSE}: 
	\begin{equation}
		\partial_t \psi = A(\psi) + B(\psi),
	\end{equation}
	where 
	\begin{equation}
		A (\psi) = i \Delta \psi, \qquad B(\psi) = -i (V + f(|\psi|^2)) \psi,
	\end{equation}
	with 
	\begin{equation}
		f(\rho) := \beta \rho, \qquad \rho \geq 0. 
	\end{equation}
	Then the NLSE \eqref{NLSE} can be decomposed into two sub-problems. The first one is
	\begin{equation}
		\left\{
		\begin{aligned}
			&\partial_t \psi(x, t) = A(\psi) = i \Delta \psi(x, t), \quad x \in \Omega, \quad t>0, \\
			&\psi(x, 0) = \psi_0(x), \quad x \in \overline{\Omega},
		\end{aligned}
		\right.
	\end{equation}
	which can be formally integrated exactly in time as
	\begin{equation}\label{eq:linear_step}
		\psi(\cdot, t) = e^{i t \Delta} \psi_0(\cdot), \qquad t \geq 0.
	\end{equation}
	The second one is
	\begin{equation}
		\left\{
		\begin{aligned}
			&\partial_t \psi(x, t) = B(\psi) = -iV(\vx)\psi(x, t) -i f(|\psi(x, t)|^2)\psi(x, t), \quad  t>0, \\
			&\psi(x, 0) = \psi_0(x), \quad x \in \overline{\Omega},
		\end{aligned}
		\right.
	\end{equation}
	which, by noting
	$|\psi(x, t)|=|\psi_0(x)|$ for $t \geq 0$, can be integrated exactly in time as
	\begin{equation}\label{eq:nonl_step}
		\psi(x, t) = \Phi_B^t (\psi_0)(x) := \psi_0(x)e^{- i t (V(x) + f(|\psi_0(x)|^2))}, \quad x \in \overline{\Omega}, \quad t \geq 0. 
	\end{equation}
	
	Choose a time step size  $ \tau > 0 $, denote time steps as $ t_n = n \tau $ for $ n = 0, 1, ... $, and let $ \psi^{[n]}(\cdot) $ be the approximation of $ \psi(\cdot, t_n) $ for $ n \geq 0 $. Then a first-order semi-discretization of the NLSE \eqref{NLSE} via the Lie-Trotter splitting is given as
	\begin{equation}\label{eq:LT}
		\psi^{[n+1]} = e^{i \tau \Delta} \Phi_B^\tau\left (\psi^{[n]} \right ), \quad n \geq 0, 
	\end{equation}
	with $ \psi^{[0]}(x) = \psi_0(x) $ for $x\in\overline{\Omega}$. 
	
	\subsection{The FS method and the FS method with quadrature}
	
	Then we further discretize the semi-discretization \eqref{eq:LT} in space by the FS method to obtain a fully discrete scheme. 
	Choose a mesh size $h = (b-a)/N$ with $N$ being a positive even integer, and denote the grid points as \vspace{-0.5em}
	\begin{equation}
		x_j = a + jh, \qquad 0 \leq j \leq N. \vspace{-1em}
	\end{equation}
	Define the index sets
	\begin{equation}\vspace{-1em}
		\mathcal{T}_N = \left\{-\frac{N}{2}, \cdots, \frac{N}{2}-1 \right\}, \qquad \mathcal{T}_N^0 = \{0, 1, \cdots, N\}. 
	\end{equation}
	Denote
	\begin{align}
		&X_N = \text{span}\left\{e^{i \mu_l(x - a)}: l \in \mathcal{T}_N\right\}, \quad \mu_l = \frac{2 \pi l}{b-a}, \\
		&Y_N = \{v=(v_0, v_1, \cdots, v_N)^T \in \C^{N+1}: v_0 = v_N\}. 
	\end{align}
	Let $ P_N:L^2(\Omega) \rightarrow X_N $ be the standard $ L^2 $-projection onto $ X_N $ and $I_N: Y_N \rightarrow X_N$ be the standard Fourier interpolation operator as 
	\begin{equation}
		\vspace{-0.25em}
		(P_N u)(x) = \sum_{l \in \mathcal{T}_N} \widehat u_l e^{i \mu_l(x - a)}, \quad (I_N v)(x) = \sum_{l \in \mathcal{T}_N} \widetilde{v}_l e^{i \mu_l(x - a)},  \qquad x \in \overline{\Omega} = [a, b], 
	\end{equation}
	where $ u \in L^2(\Omega) $, $v \in Y_N$ and
	\begin{equation}\label{eq:hat}
		\vspace{-0.25em}
		\widehat{u}_l = \frac{1}{b-a} \int_a^b u(x) e^{-i \mu_l (x-a)} \rmd x, \quad \widetilde{v}_l = \frac{1}{N} \sum_{j=0}^{N-1} v_j e^{- i \mu_l (x_j - a)}, \qquad l \in \mathcal{T}_N. 
	\end{equation}
	Let $ \psi^n(\cdot) $ be the numerical approximations of $ \psi(\cdot, t_n) $ for $ n \geq 0 $. Then the first-order Lie-Trotter time-splitting Fourier spectral (LTFS) method \cite{bao2023_improved} reads
	\begin{equation}\label{eq:LTFS_scheme}
		\begin{aligned}
			&\psi^{(1)}(x) = e^{- i \tau (V(x) + f(|\psi^n (x)|^2))} \psi^n(x), \\
			&\psi^{n+1}(x) = \sum_{l \in \mathcal{T}_N} e^{- i \tau \mu_l^2} \widehat{(\psi^{(1)})}_l e^{i \mu_l(x-a)}, 
		\end{aligned}
		\quad x \in \Omega, \qquad n \geq 0, 
	\end{equation}
	where $ \psi^0 = P_N \psi_0 $ in \eqref{eq:LTFS_scheme}.

	Under low regularity assumptions on potential, optimal error bounds on the LTFS method are established very recently in \cite{bao2023_improved}, and we recall the results here. Let $0<T<T_\text{max}$ with $T_\text{max}$ being the maximum existing time of the solution $\psi$ to \eqref{NLSE}. Under the assumptions that $V \in L^\infty(\Omega)$ and $\psi \in C([0, T]; H^2_\text{per}(\Omega)) \cap C^1([0, T]; L^2(\Omega))$, when $0<h<h_0$ for some $h_0$ sufficiently small and $\tau \leq h^2/\pi$, we have
	\begin{equation}\label{thm:LTFS}
		\| \psi(\cdot, t_n) - \psin{n} \|_{L^2} \lesssim \tau + h^2, \quad \| \psi(\cdot, t_n) - \psin{n} \|_{H^1} \lesssim \tau^\frac{1}{2} + h, \quad 0 \leq n \leq T/\tau. 
	\end{equation}
	In addition, if $ V \in W^{1, 4}(\Omega) \cap H^1_\text{\rm per}(\Omega) $ and $ \psi \in C([0, T]; H^3_\text{\rm per}(\Omega)) \cap C^1([0, T]; H^1(\Omega)) $, we have
	\begin{equation}\label{thm:LTFS_2}
		\| \psi(\cdot, t_n) - \psin{n} \|_{H^1} \lesssim \tau + h^2, \quad 0 \leq n \leq T/\tau. 
	\end{equation}
	
	However, the LTFS method cannot be efficiently implemented due to a lot of integrals in \eqref{eq:hat} involved in the computation of $\widehat{(\psi^{(1)})}_l$ in \eqref{eq:LTFS_scheme}. In practice, one usually approximate these integrals by some quadrature rules. For any integer $M \geq N$, define $M$ equally distributed quadrature points as
	\begin{equation*}
		x^M_j = a + j \frac{b-a}{M}, \qquad j \in \mathcal{T}_M^0. 
	\end{equation*}
	Then, for $u \in L^\infty(\Omega)$ bounded on $\overline{\Omega}$, the $l$-th Fourier coefficient $\widehat{u}_l$ can be approximated by
	\begin{equation}\label{eq:quadrature}
		\widehat{u}_l \approx \widetilde{u}_{M, l}:= \frac{1}{M} \sum_{j=0}^{M-1} u(x^M_j) e^{-i\mu_l(x^M_j - a)}, \quad l \in \mathcal{T}_N.  
	\end{equation} 
	Note that $\widetilde{u}_{M, l} = \widetilde{v}_l$ for $l \in \mathcal{T}_N$, where
	\begin{equation}
		v = (v_0, \cdots, v_M)^T \in Y_M, \quad v_j = u(x^M_j), \quad j \in \mathcal{T}_M^0. 
	\end{equation}
	
	By applying the approximation $\widehat{(\psi^{(1)})}_l \approx \widetilde{(\psi^{(1)})}_{M, l}$ for $l \in \mathcal{T}_N$ with $M \geq N$ in \eqref{eq:LTFS_scheme}, we obtain the Fourier spectral method with quadrature (FSwQ), which leads to the LTFSwQ method as
	\begin{equation}\label{eq:LTFSq_scheme}
		\begin{aligned}
			&\psi^{(1)}(x) = e^{- i \tau (V(x) + f(|\psi^n_M (x)|^2))} \psi^n_M(x), \\
			&\psi^{n+1}_M(x) = \sum_{l \in \mathcal{T}_N} e^{- i \tau \mu_l^2} \widetilde{(\psi^{(1)})}_{M, l} e^{i \mu_l(x-a)}, 
		\end{aligned}
		\quad x \in \Omega, \qquad n \geq 0, 
	\end{equation}
	with $\psi^0_M \in X_N$ given by
	\begin{equation}\label{eq:LTFSq_ini}
		\psi^0_M(x) = \sum_{l \in \mathcal{T}_N} e^{- i \tau \mu_l^2} \widetilde{(\psi_0)}_{M, l} e^{i \mu_l(x-a)}, \quad x \in \overline{\Omega}.  
	\end{equation}
	Note that the computational time at each time step is at $O(M \log M)$ with the use of FFT. 
	
	If one let $M$ go to infinity in \eqref{eq:LTFSq_scheme}-\eqref{eq:LTFSq_ini}, then the FSwQ method will converge to the FS method used in \eqref{eq:LTFS_scheme}. On the other hand, if one choose $M = N$ in \eqref{eq:LTFSq_scheme}-\eqref{eq:LTFSq_ini}, then the FSwQ method collapses to the standard FP method. 
	
	Compared with the LTFS method \eqref{eq:LTFS_scheme}, the LTFSwQ method \eqref{eq:LTFSq_scheme}-\eqref{eq:LTFSq_ini} is easy to implement. However, it is very hard to establish error estimates of it due to the low regularity of potential $V$. In fact, it is impossible to obtain optimal error bounds in space for the LTFSwQ method with any fixed ratio of $M$ to $N$ (i.e. $M=qN$ with $q$ fixed) as order reduction can be observed numerically (see Section \ref{sec:numerical experiments} and \cite{bao2023_EWI,henning2022}). 
	
	\subsection{An extended Fourier pseudospectral (eFP) method}
	Here, we propose an extended Fourier pseudospectral (eFP) method to discretize the Lie-Trotter splitting in space, which leads to the LTeFP method. The LTeFP method enjoys the benefits of both the LTFS and the LTFSwQ methods: (i) we can establish the same optimal error bounds as the LTFS method analyzed in \cite{bao2023_improved} and (ii) it can be easily implemented as the LTFSwQ method with computational cost the same as the LTFSwQ with $M = 4N$ quadrature points. 
	
	Let $\psihn{n}_j$ be the numerical approximation to $\psi(x_j, t_n)$ for $j \in \mathcal{T}_N^0$ and $n \geq 0$, and denote $\psihn{n}: = (\psihn{n}_0, \psihn{n}_1, \cdots, \psihn{n}_{N})^T \in Y_N$. Then the \textit{LTeFP} method reads
	\begin{equation}\label{LTEFP_scheme}
		\begin{aligned}
			\begin{aligned}
				&\psi^{(1)}(x) = e^{- i \tau V(x)} I_N \left( \psihn{n} e^{-i \tau f(|\psihn{n}|^2)} \right)(x), \quad x \in \Omega \\
				&\psihn{n+1}_j = \sum_{l \in \mathcal{T}_N} e^{- i \tau \mu_l^2} \widehat{(\psi^{(1)})}_l e^{i \mu_l(x_j-a)}, \quad j \in \mathcal{T}_N^0, \quad n \geq 0,  
			\end{aligned} 
		\end{aligned}
	\end{equation}
	where $\psihn{0}_j = \psi_0(x_j)$ for $j \in \mathcal{T}_N^0$. 
	
	In the following, we show how to efficiently compute the Fourier coefficients $\widehat{(\psi^{(1)})}_l$ in \eqref{LTEFP_scheme}. 
	For any $\phi \in Y_N$, we have
	\begin{align*}\label{eq:EFP}
		P_N\left ( e^{- i \tau V} I_N \left( \phi e^{-i \tau f(|\phi|^2)} \right)  \right ) 
		&= P_N\left ( P_{2N} \left (e^{- i \tau V}\right ) I_N \left( \phi e^{-i \tau f(|\phi|^2)} \right)  \right ) \\
		&= P_N I_{4N} \left ( P_{2N} \left (e^{- i \tau V}\right ) I_N \left( \phi e^{-i \tau f(|\phi|^2)} \right)  \right ), 
	\end{align*}
	which implies, for $\psi^{(1)}$ in \eqref{LTEFP_scheme}, 
	\begin{equation}\label{eq:extended_FFT}
		\widehat{(\psi^{(1)})}_l = \widetilde{U}_l, \quad l \in \mathcal{T}_N, 
	\end{equation}
	where $U \in Y_{4N}$ is defined as
	\begin{equation}\label{UjP2n}
		U_j = P_{2N} \left (e^{- i \tau V}\right )(x^{4N}_j) \times I_N \left( \psihn{n} e^{-i \tau f(|\psihn{n}|^2)} \right) (x^{4N}_j), \qquad j \in \mathcal{T}_{4N}^0. 
	\end{equation}
	Hence, as long as $P_{2N}(e^{- i \tau V})$ is precomputed, which can be done either numerically or analytically, the main computational cost of the LTeFP method \eqref{LTEFP_scheme} at each time step comes from applying FFT to a vector with length $4N$.

	\section{Optimal error bounds for the eFP method }\label{sec:LTEFP}
	In this section, we shall establish optimal error bounds for the LTeFP method
	\eqref{LTEFP_scheme} with \eqref{eq:extended_FFT} and \eqref{UjP2n}. Our main results are as follows. 
	
	\subsection{Main results}\label{sec:main_results}
	
	For $ \psihn{n} \ (n \geq 0) $ obtained from the first-order LTeFP method \eqref{LTEFP_scheme}, we have
	\begin{theorem}\label{thm:LTEFP}
		Assume that $ V \in L^\infty(\Omega) $ and $ \psi \in C([0, T]; H^2_\text{\rm per}(\Omega)) \cap C^1([0, T]; L^2(\Omega)) $. There exists $ h_0 > 0 $ sufficiently small such that when $ 0 < h < h_0 $ and $ \tau \leq h^2/\pi $, we have 
		\begin{equation}\label{eq:LT_L2}
			\| \psi(\cdot, t_n) - I_N \psihn{n}  \|_{L^2} \lesssim \tau + h^2, \quad \| \psi(\cdot, t_n) - I_N \psihn{n}  \|_{H^1} \lesssim \sqrt{\tau} + h, \quad 0 \leq n \leq T/\tau. 
		\end{equation}
		In addition, if $ V \in W^{1, 4}(\Omega) \cap H^1_\text{\rm per}(\Omega) $ and $ \psi \in C([0, T]; H^3_\text{\rm per}(\Omega)) \cap C^1([0, T]; H^1(\Omega)) $, we have
		\begin{equation}\label{eq:LT_H1}
			\| \psi(\cdot, t_n) - I_N \psihn{n} \|_{H^1} \lesssim \tau + h^2, \quad 0 \leq n \leq T/\tau.  
		\end{equation}
	\end{theorem}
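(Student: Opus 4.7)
The plan is to reduce the LTeFP analysis to a perturbation argument around the LTFS scheme, whose optimal bounds are already recorded in \eqref{thm:LTFS}--\eqref{thm:LTFS_2}. Using the identity \eqref{eq:extended_FFT}--\eqref{UjP2n} and setting $\phi^n := I_N \psihn{n} \in X_N$, the LTeFP update \eqref{LTEFP_scheme} can be rewritten on band-limited functions as $\phi^{n+1} = e^{i\tau\Delta} P_N\bigl(e^{-i\tau V} \, I_N(\phi^n e^{-i\tau f(|\phi^n|^2)})\bigr)$, whereas the LTFS iterate satisfies $\psin{n+1} = e^{i\tau\Delta} P_N\bigl(e^{-i\tau V}\,\psin{n} e^{-i\tau f(|\psin{n}|^2)}\bigr)$. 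The only structural difference is that the nonlinear exponential factor is replaced by its trigonometric interpolant; the potential factor is treated exactly through $P_{2N}(e^{-i\tau V})$, exactly as in LTFS. Together with the initial discrepancy $\|\psin{0} - \phi^0\|_{L^2} = \|(P_N - I_N)\psi_0\|_{L^2} \lesssim h^2 \|\psi_0\|_{H^2}$, the theorem would follow from a Gronwall bound on $e^n := \psin{n} - \phi^n$ combined with a triangle inequality against \eqref{thm:LTFS}--\eqref{thm:LTFS_2}.

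To execute that Gronwall argument, I would control the one-step remainder $r^n := (I - I_N)(\phi^n e^{-i\tau f(|\phi^n|^2)})$ via the standard trigonometric interpolation estimate $\|(I - I_N) g\|_{H^s} \lesssim h^{m-s}\|g\|_{H^m}$ for $0 \le s \le m$, $m > 1/2$. Under a uniform $L^\infty$ and $H^2$ control of $\phi^n$, composition estimates for the smooth map $z \mapsto z e^{-i\tau\beta|z|^2}$ yield $\|\phi^n e^{-i\tau f(|\phi^n|^2)}\|_{H^2} \lesssim 1$, hence $\|r^n\|_{L^2} \lesssim h^2$. Combined with the $L^2$-isometry of $e^{i\tau\Delta}$, the contractivity of $P_N$, the unimodularity of $e^{-i\tau V}$, and the Lipschitz continuity of $\phi \mapsto I_N(\phi e^{-i\tau f(|\phi|^2)})$ on $L^2 \cap L^\infty$, this produces $\|e^{n+1}\|_{L^2} \le (1 + C\tau)\|e^n\|_{L^2} + C h^2$; discrete Gronwall then gives $\|e^n\|_{L^2} \lesssim h^2$, and the triangle inequality with \eqref{thm:LTFS} delivers the $L^2$ bound in \eqref{eq:LT_L2}. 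The $H^1$ rate $\sqrt{\tau}+h$ follows by interpolating between this $L^2$ bound and the uniform $H^2$-boundedness of $\phi^n$ and of $\psi(\cdot,t_n)$.

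The main obstacle will be closing the induction that supplies the uniform $L^\infty$ (and, for the stronger statement, $H^1$) control of $\phi^n$ used above. I would run an induction on $n$: assume $\|\phi^m\|_{L^\infty} \le C_*$ for $m \le n$, derive $\|e^{n+1}\|_{L^2} \lesssim \tau + h^2$ as just sketched, and then promote this through the 1D inverse inequality $\|g\|_{L^\infty} \lesssim h^{-1/2}\|g\|_{L^2}$ on $X_N$ to obtain $\|\phi^{n+1} - \psin{n+1}\|_{L^\infty} \lesssim h^{-1/2}(\tau + h^2)$. Under the CFL condition $\tau \le h^2/\pi$ this is $O(h^{3/2})$, which together with the inherited $L^\infty$-bound on $\psin{n+1}$ via \eqref{thm:LTFS} and Sobolev embedding closes the induction for $h$ small enough. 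For \eqref{eq:LT_H1} under $V \in W^{1,4} \cap H^1_\text{per}$ and $\psi \in C([0,T]; H^3_\text{per})$, the same scheme is run in $H^1$: composition gives $\|\phi^n e^{-i\tau f(|\phi^n|^2)}\|_{H^3} \lesssim 1$, hence $\|r^n\|_{H^1} \lesssim h^2$, and the $H^1$-stability of the step is recovered precisely as in the LTFS analysis of \cite{bao2023_improved}, where the loss of a derivative incurred by the rough factor $V' \in L^4$ in differentiating $e^{-i\tau V} \phi$ is absorbed using the inverse estimate $\|P_N g\|_{H^2} \lesssim h^{-1}\|P_N g\|_{H^1}$ against the $\tau \lesssim h^2$ budget. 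No fundamentally new analytical input is required beyond the interpolation-remainder bound on $r^n$; the heart of the argument remains the LTFS machinery already available.
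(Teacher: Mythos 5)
Your overall architecture matches the paper's: reduce to the discrepancy $e^n = \psin{n} - I_N\psihn{n}$ between the LTeFP and LTFS iterates, bound it by a local error plus stability (Lady--Windermere / discrete Gronwall), close the uniform $L^\infty$ control by induction with the inverse inequality under the CFL condition, and conclude by the triangle inequality against \eqref{thm:LTFS}--\eqref{thm:LTFS_2}. However, there is a genuine quantitative gap in your local truncation error: you bound the one-step remainder by $\|r^n\|_{L^2} = \|(I-I_N)(\phi^n e^{-i\tau f(|\phi^n|^2)})\|_{L^2} \lesssim h^2$ and then claim that the recursion $\|e^{n+1}\|_{L^2} \le (1+C\tau)\|e^n\|_{L^2} + Ch^2$ yields $\|e^n\|_{L^2} \lesssim h^2$ via discrete Gronwall. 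It does not: summing an additive term of size $h^2$ over $n \le T/\tau$ steps gives $\|e^n\|_{L^2} \lesssim h^2/\tau$, which under $\tau \le h^2/\pi$ is $O(1)$ and proves nothing. The paper's Proposition \ref{prop:local_1} avoids this by exploiting that $\phi^n \in X_N$, so $(I-I_N)\phi^n = 0$ and hence
\begin{equation*}
r^n = (I-I_N)\bigl(\phi^n (e^{-i\tau f(|\phi^n|^2)} - 1)\bigr), \qquad \|\phi^n (e^{-i\tau f(|\phi^n|^2)} - 1)\|_{H^2} \le C(\|\phi^n\|_{H^2})\,\tau
\end{equation*}
by Lemma \ref{lem:Hm_bound}, which upgrades the local error to $O(\tau h^2)$ per step; only then does Gronwall deliver the global $O(h^2)$ bound. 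The same factor of $\tau$ is missing from your $H^1$ local error for \eqref{eq:LT_H1}, where the paper additionally needs the product estimate $\|vw\|_{H^1} \lesssim \|v\|_{W^{1,4}}\|w\|_{H^1}$ to peel off the rough factor $e^{-i\tau V}$ before applying the interpolation estimate. With this subtraction-of-identity device inserted, the rest of your argument (initialization $\|(P_N - I_N)\psi_0\|_{L^2} \lesssim h^2$, $L^2$-stability of the nonlinear substep, induction for the $L^\infty$ bound via $\|g\|_{L^\infty} \lesssim h^{-1/2}\|g\|_{L^2}$ on $X_N$, and passage from $L^2$ to $H^1$ rates) is sound and essentially identical to the paper's; your proposed use of $\|P_N g\|_{H^2} \lesssim h^{-1}\|P_N g\|_{H^1}$ for the $H^1$ stability is a harmless deviation, since the paper instead absorbs the $W^{1,4}$ potential directly via H\"older and $H^1 \hookrightarrow L^4$ without invoking the CFL condition at that step.
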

	

	\begin{remark}[Optimal spatial convergence]\label{rem:spatial_error}
		When assuming higher regularity on the exact solution, one could obtain higher spatial convergence orders in \eqref{eq:LT_L2}-\eqref{eq:LT_H1}, i.e. the spatial error would be at $O(h^m)$ in $L^2$-norm and at $O(h^{m-1})$ in $H^1$-norm if $\psi \in C([0, T]; H^m_\text{per}(\Omega))$ with $ m \geq 2$. In other words, the spatial convergence order of the eFP method is optimal with respect to the regularity of the exact solution. Thus, the eFP method can always achieve optimal spatial convergence orders in practice. 
	\end{remark}
	
	Note that our regularity assumptions on the exact solution are compatible with the assumptions on potential as discussed in Remark 2.3 of \cite{bao2023_improved}. Besides, as shown in \cite{bao2023_improved} as well as the numerical results in Section 5, the time step size restriction $\tau \leq h^2/\pi$ is necessary and optimal. Also, it is natural in terms of the balance between temporal errors and spatial errors.  
	
	In the following, we shall present the proof of Theorem \ref{thm:LTEFP}. With the error estimates of the LTFS method recalled in \eqref{thm:LTFS} and \eqref{thm:LTFS_2}, the error bounds on the LTeFP method can be obtained by directly estimating the error between the solution $\psi^n$ obtained by the LTFS method \eqref{eq:LTFS_scheme} and the solution $\psihn{n}$ obtained by the LTeFP method \eqref{LTEFP_scheme}. We would like to mention that although estimating the error between the LTFS solution and the LTeFP solution seems straightforward, the error between the LTFS solution and the exact solution, which is estimated in \cite{bao2023_improved}, is insightful and non-trivial. 
	
	For ease of presentation, we first define two numerical flows. Let $\Phit{1}:X_N \rightarrow X_N$ be the numerical flow associated with the LTeFP method \eqref{LTEFP_scheme}: 
	\begin{equation}\label{eq:Phi1_def}
		\Phit{1}(\phi) := e^{i \tau \Delta} P_N \left (e^{- i \tau V} I_N \left(\phi e^{- i \tau f(|\phi|^2)}\right) \right ), \qquad \phi \in X_N. 
	\end{equation}
	Then, for $\psihn{n}\ (n\geq 0)$ obtained from \eqref{LTEFP_scheme}, we have
	\begin{equation}\label{LTEFP}
		\begin{aligned}
			&I_N \psihn{n+1} = \Phit{1}(I_N \psihn{n}), \quad n \geq 0, \\
			&I_N \psihn{0} = I_N \psi_0.  
		\end{aligned}
	\end{equation}
	Let $\St{1}:X_N \rightarrow X_N$ be the numerical flow associated with the LTFS method \eqref{eq:LTFS_scheme}:
	\begin{equation}\label{eq:S1}
		\St{1}(\phi) := e^{i \tau \Delta} P_N \Phi_B^\tau(\phi), \qquad \phi \in X_N. 
	\end{equation}
	Then, for $\psi^n \ (n\geq 0)$ obtained from \eqref{eq:LTFS_scheme}, we have
	\begin{equation}\label{eq:LTFS}
		\begin{aligned}
			&\psin{n+1} = \St{1}(\psin{n}), \quad n \geq 1, \\ 
			&\psin{0} = P_N \psi_0. 
		\end{aligned}
	\end{equation}
	
	\subsection{Proof of the optimal $L^2$-norm error bound}
	In this subsection, we shall establish optimal $L^2$-norm error bound \eqref{eq:LT_L2} for the LTeFP method \eqref{LTEFP_scheme}.

	
	From \eqref{thm:LTFS}, by standard projection error estimates of $P_N$ \cite{book_spectral}, we have
	\begin{equation}
		\| P_N \psi(\cdot, t_n) - \psin{n} \|_{L^2} \lesssim \tau + h^2, \quad \| P_N \psi(\cdot, t_n) - \psin{n} \|_{H^1} \lesssim \tau^\frac{1}{2} + h, \quad 0 \leq n \leq T/\tau. 
	\end{equation}
	Using the inverse inequality $\| \phi \|_{H^2} \lesssim h^{-2} \| \phi \|_{L^2}$ for $\phi \in X_N$ \cite{book_spectral} and $\tau \lesssim h^2$, we obtain a uniform $H^2$-norm bound of $\psin{n} \ (0 \leq n \leq /\tau)$ as
	\begin{align}\label{eq:uniform_H2_bound}
		\| \psi^n \|_{H^2} 
		&\lesssim \| \psi^n - P_N \psi(t_n) \|_{H^2} + \| P_N \psi(t_n) \|_{H^2} \notag \\
		&\lesssim h^{-2} \| \psi^n - P_N \psi(t_n) \|_{L^2} + \| \psi(t_n) \|_{H^2} \notag \\
		&\lesssim 1 + \| \psi \|_{L^\infty([0, T]; H^2)}, \qquad 0 \leq n \leq T/\tau. 
	\end{align}
	According to \eqref{eq:uniform_H2_bound}, we define a constant $M_2$ as
	\begin{equation*}
		M_2 := \max_{0 \leq n \leq \frac{T}{\tau}} \{ \| \psin{n} \|_{H^2}, \| \psin{n} \|_{L^\infty} \}. 
	\end{equation*}
	
	\begin{lemma}\label{lem:Hm_bound}
		Let $\phi \in X_N$ and $0 < \tau < 1$. For $m \geq 2$, we have
		\begin{equation*}
			\| \phi (e^{- i \tau f(|\phi|^2)} - 1) \|_{H^m} \leq C(\| \phi \|_{H^m}) \tau.
		\end{equation*}
	\end{lemma}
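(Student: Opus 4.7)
The approach I would take is to Taylor-expand the complex exponential as a power series in $\tau$ and then exploit the Banach-algebra structure of $H^m(\Omega)$. Since $\phi\in X_N\subset C^\infty_{\mathrm{per}}(\Omega)$, the function $f(|\phi|^2)=\beta|\phi|^2$ is smooth and bounded on $\overline\Omega$, so I may write
\begin{equation*}
\phi\bigl(e^{-i\tau f(|\phi|^2)}-1\bigr)=\sum_{k=1}^\infty \frac{(-i\tau\beta)^k}{k!}\,\phi\,|\phi|^{2k}.
\end{equation*}
The essential feature of this expansion is that every term carries at least one factor of $\tau$. The task then reduces to controlling each $\|\phi\,|\phi|^{2k}\|_{H^m}$ by a power of $\|\phi\|_{H^m}$ and summing, making sure the resulting series converges to a finite quantity depending only on $\|\phi\|_{H^m}$.

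The key technical ingredient is the Banach-algebra inequality $\|uv\|_{H^m}\leq C_A\|u\|_{H^m}\|v\|_{H^m}$ with $C_A=C_A(m,\Omega)$, valid for $m\geq 2>1/2$ in one dimension thanks to the embedding $H^1\hookrightarrow L^\infty$. Applying it iteratively, and noting that $\||\phi|^2\|_{H^m}=\|\phi\bar\phi\|_{H^m}\leq C_A\|\phi\|_{H^m}^2$, I would obtain
\begin{equation*}
\|\phi\,|\phi|^{2k}\|_{H^m}\leq C_A^{2k}\,\|\phi\|_{H^m}^{2k+1},\qquad k\geq 1.
\end{equation*}

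Plugging this bound into the series, factoring one $\tau$ outside, and using $\tau^{k-1}\leq 1$ for $k\geq 1$ (since $0<\tau<1$), I would conclude
\begin{equation*}
\bigl\|\phi(e^{-i\tau f(|\phi|^2)}-1)\bigr\|_{H^m}\leq \tau\,\|\phi\|_{H^m}\sum_{k=1}^\infty \frac{(|\beta|C_A^2\|\phi\|_{H^m}^2)^k}{k!}\leq \tau\,\|\phi\|_{H^m}\bigl(e^{|\beta|C_A^2\|\phi\|_{H^m}^2}-1\bigr),
\end{equation*}
which is precisely of the form $\tau\cdot C(\|\phi\|_{H^m})$ required by the lemma.

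The main (and essentially only) obstacle is making sure the geometrically growing algebra constants $C_A^{2k}$ do not overwhelm the sum; this is handled cleanly by the factorial $k!$ in the Taylor denominator, which turns the majorant into an entire function of $\|\phi\|_{H^m}^2$. A self-contained alternative would be to use the integral identity $e^{-i\tau f(|\phi|^2)}-1=-i\tau f(|\phi|^2)\int_0^1 e^{-is\tau f(|\phi|^2)}\,ds$ together with a Moser-type composition estimate for $\|e^{-is\tau f(|\phi|^2)}\|_{H^m}$, but the power-series argument above is the most elementary route and already yields the claimed $\tau$-linear bound.
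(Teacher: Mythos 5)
Your proposal is correct and follows essentially the same route as the paper: both Taylor-expand the exponential, invoke the Banach-algebra property of $H^m(\Omega)$ for $m\geq 2$, extract one factor of $\tau$ using $0<\tau<1$, and sum the remaining series into an exponential of $\|\phi\|_{H^m}^2$. The only cosmetic difference is that the paper first factors out $\|\phi\|_{H^m}$ via the algebra inequality and then expands $\|e^{-i\tau f(|\phi|^2)}-1\|_{H^m}$, whereas you expand term by term and track the algebra constant explicitly; the two computations are interchangeable.
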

	
	\begin{proof}
		Noting that $H^m(\Omega)$ with $m \geq 2$ is an algebra, recalling $f(\rho) = \beta \rho$, we have
		\begin{align}
			\| \phi (e^{- i \tau f(|\phi|^2)} - 1) \|_{H^m} 
			&\leq \| \phi \|_{H^m} \| e^{- i \tau f(|\phi|^2)} - 1 \|_{H^m} \leq \| \phi \|_{H^m} \sum_{k=1}^\infty \frac{\tau^k \|f(|\phi|^2)\|_{H^m}^k}{k!} \notag \\
			&\leq \tau \| \phi \|_{H^m} \sum_{k=1}^\infty \frac{\|f(|\phi|^2)\|_{H^m}^k}{k!} \leq \tau \| \phi \|_{H^m} e^{\| f(|\phi|^2) \|_{H^m}} \notag \\
			&\leq \tau \| \phi \|_{H^m} e^{|\beta| \| \phi \|^2_{H^m}} = C(\| \phi \|_{H^m}) \tau, 
		\end{align}
		which completes the proof. 
		\qed 
	\end{proof}
	
	\begin{proposition}[Local truncation error]\label{prop:local_1}
		Let $\phi \in X_N$. Then we have  
		\begin{equation*}
			\| \St{1}(\phi) - \Phit{1}(\phi) \|_{L^2} \leq C(\| \phi \|_{H^2})\tau h^2. 
		\end{equation*}
	\end{proposition}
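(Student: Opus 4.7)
The plan is to unwind the two numerical flows, exploit the fact that $e^{i\tau\Delta}$ is an $L^2$-isometry and that $P_N$ is an $L^2$-orthogonal projection (hence $L^2$-contractive), and then reduce the proposition to a single Fourier interpolation error on a function that we already know is small thanks to Lemma \ref{lem:Hm_bound}.

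Concretely, I would first write
\begin{equation*}
\St{1}(\phi) - \Phit{1}(\phi) = e^{i\tau\Delta} P_N \Bigl( e^{-i\tau V}\bigl[ \phi\, e^{-i\tau f(|\phi|^2)} - I_N\bigl(\phi\, e^{-i\tau f(|\phi|^2)}\bigr) \bigr] \Bigr),
\end{equation*}
using $\Phi_B^\tau(\phi) = e^{-i\tau V} \phi e^{-i\tau f(|\phi|^2)}$ and factoring the common $e^{-i\tau V}$ out of the two $P_N$ arguments. Taking $L^2$-norms, the isometry of $e^{i\tau\Delta}$, the contractivity of $P_N$, and the pointwise identity $|e^{-i\tau V}| \equiv 1$ reduce the estimate to
\begin{equation*}
\| \St{1}(\phi) - \Phit{1}(\phi) \|_{L^2} \leq \bigl\| \phi\, e^{-i\tau f(|\phi|^2)} - I_N\bigl(\phi\, e^{-i\tau f(|\phi|^2)}\bigr) \bigr\|_{L^2}.
\end{equation*}

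Next I would use the crucial observation that $\phi \in X_N$, so $I_N \phi = \phi$. Writing $\phi\, e^{-i\tau f(|\phi|^2)} = \phi + \phi\bigl(e^{-i\tau f(|\phi|^2)} - 1\bigr)$ and applying $I_N$ to this decomposition, the $\phi$-term cancels, leaving
\begin{equation*}
\phi\, e^{-i\tau f(|\phi|^2)} - I_N\bigl(\phi\, e^{-i\tau f(|\phi|^2)}\bigr) = g - I_N g, \qquad g := \phi\bigl(e^{-i\tau f(|\phi|^2)} - 1\bigr).
\end{equation*}
Then the standard trigonometric-interpolation error estimate $\| g - I_N g \|_{L^2} \lesssim h^2 \| g \|_{H^2}$ for $g \in H^2_\text{per}(\Omega)$ (see, e.g., \cite{book_spectral}) combined with Lemma \ref{lem:Hm_bound} applied at $m=2$, which gives $\| g \|_{H^2} \leq C(\|\phi\|_{H^2}) \tau$, yields the desired bound $C(\|\phi\|_{H^2}) \tau h^2$.

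I do not anticipate a real obstacle here: the argument is essentially bookkeeping once one recognises the right algebraic rearrangement. The only step that requires a small amount of care is the very first one, namely checking that the phase factor $e^{-i\tau V}$ can indeed be pulled outside the interpolation-error expression (it cannot be pulled inside $I_N$ because $V$ has low regularity and $e^{-i\tau V} \notin X_N$), so that its modulus-one property can be used to discard it before invoking the smooth $H^2$-bound on $g$. This is exactly why the $e^{-i\tau V}$ in the scheme \eqref{LTEFP_scheme} is placed \emph{outside} $I_N$ in the definition of $\Phit{1}$, and matching this structure against $\Phi_B^\tau$ is the only place where the specific design of the eFP method enters the local-error estimate.
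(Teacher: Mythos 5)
Your proposal is correct and follows essentially the same route as the paper: the same rearrangement isolating the interpolation error, the same use of the unimodularity of $e^{-i\tau V}$ together with the $L^2$-boundedness of $e^{i\tau\Delta}$ and $P_N$, the same cancellation $I_N\phi=\phi$ reducing the estimate to $(I-I_N)\bigl(\phi(e^{-i\tau f(|\phi|^2)}-1)\bigr)$, and the same appeal to the standard interpolation estimate plus Lemma \ref{lem:Hm_bound} with $m=2$. No gaps; your closing remark about why $e^{-i\tau V}$ must sit outside $I_N$ is exactly the structural point the scheme is designed around.
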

	
	\begin{proof}
		By the definition of $\St{1}$ and $\Phit{1}$ in \eqref{eq:S1} and \eqref{eq:Phi1_def}, recalling \eqref{eq:nonl_step}, we have
		\begin{align}\label{diff}
			&\St{1}(\phi) - \Phit{1}(\phi) \notag \\
			&= e^{i \tau \Delta} P_N \left( e^{- i \tau V} \left(\phi e^{-i\tau f(|\phi|^2)}\right) \right) - e^{i \tau \Delta} P_N \left( e^{- i \tau V} I_N\left(\phi e^{-i\tau f(|\phi|^2)}\right) \right),	
		\end{align}
		which, by the boundedness of $e^{it\Delta}$ and $P_N$, implies
		\begin{equation}\label{interpolation_error}
			\| \St{1}(\phi) - \Phit{1}(\phi) \|_{L^2} \leq \left \| (I-I_N) \left(\phi e^{-i\tau f(|\phi|^2)} \right) \right \|_{L^2}. 
		\end{equation}
		From \eqref{interpolation_error}, noting that $I_N$ is an identity on $X_N$, by the standard interpolation error estimates of $I_N$ and Lemma \ref{lem:Hm_bound}, we have
		\begin{align*}
			\left \| (I-I_N) \left(\phi e^{-i\tau f(|\phi|^2)} \right) \right \|_{L^2} 
			&= \left \| (I-I_N) \left(\phi (e^{-i\tau f(|\phi|^2)}-1) \right) \right \|_{L^2} \notag\\
			&\lesssim h^2 \|\phi (e^{-i\tau f(|\phi|^2)}-1) \|_{H^2} \leq \tau h^2 C(\| \phi \|_{H^2}), 
		\end{align*}
		which, plugged into \eqref{interpolation_error}, yields the desired result. 
		\qed
	\end{proof}
	
	\begin{proposition}[Stability]\label{prop:stability_1}
		Let $v, w \in X_N$ such that $\| v \|_{L^\infty} \leq M$ and $\| w \|_{L^\infty} \leq M$. Then we have
		\begin{equation*}
			\| \Phit{1}(v) - \Phit{1}(w) \|_{L^2} \leq (1+C(M) \tau) \| v- w \|_{L^2}. 
		\end{equation*}
	\end{proposition}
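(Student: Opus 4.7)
The plan is to peel off the outer operators of $\Phit{1}$ using their $L^2$-boundedness and reduce the whole estimate to a pointwise Lipschitz bound on the scalar map $z \mapsto z e^{-i\tau\beta|z|^2}$ applied at the interpolation nodes. Specifically, since $e^{i\tau\Delta}$ is an $L^2$-isometry on $X_N$, $P_N$ is an $L^2$-orthogonal projection (hence non-expansive), and $V$ is real-valued so that $|e^{-i\tau V(x)}|=1$ pointwise, I would first argue
\begin{equation*}
\| \Phit{1}(v) - \Phit{1}(w) \|_{L^2} \leq \left\| I_N\!\left(v e^{-i\tau f(|v|^2)}\right) - I_N\!\left(w e^{-i\tau f(|w|^2)}\right) \right\|_{L^2}.
\end{equation*}

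Next, I would invoke the standard discrete Parseval identity for trigonometric interpolation, namely $\|I_N g\|_{L^2}^2 = h\sum_{j=0}^{N-1} |g_j|^2$ for any grid vector $g\in Y_N$, to convert the right-hand side into a pointwise sum over the nodes $x_j$. Since $v,w\in X_N$, the difference $v-w$ is itself a trigonometric polynomial so that $\|v-w\|_{L^2}^2 = h\sum_{j=0}^{N-1}|v(x_j)-w(x_j)|^2$; hence the stability estimate is reduced to proving the pointwise inequality
\begin{equation*}
\bigl| v(x_j) e^{-i\tau f(|v(x_j)|^2)} - w(x_j) e^{-i\tau f(|w(x_j)|^2)} \bigr| \leq \bigl(1+C(M)\tau\bigr) |v(x_j) - w(x_j)|,
\end{equation*}
uniformly in $j$.

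For the pointwise bound, the standard trick is to split $F(z_1)-F(z_2)$ with $F(z)=z e^{-i\tau\beta|z|^2}$ as
\begin{equation*}
F(z_1) - F(z_2) = (z_1-z_2)e^{-i\tau\beta|z_1|^2} + z_2\bigl(e^{-i\tau\beta|z_1|^2} - e^{-i\tau\beta|z_2|^2}\bigr),
\end{equation*}
and to estimate the second term using $|e^{i\alpha}-e^{i\gamma}|\leq |\alpha-\gamma|$ together with $\bigl||z_1|^2-|z_2|^2\bigr|\leq (|z_1|+|z_2|)|z_1-z_2|$. Under the hypothesis $|z_1|,|z_2|\leq M$, this yields a Lipschitz constant of $1 + 2|\beta|M^2\tau = 1+C(M)\tau$, which is exactly what is needed.

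I do not anticipate a serious obstacle here: the proof is conceptually a clean reduction to a scalar Lipschitz estimate, and the only point requiring care is to use the discrete-to-continuous $L^2$ identity for $I_N$ (so that the $L^2$-norm on the interpolant really does collapse to a weighted sum of nodal values) and the fact that $v-w\in X_N$ implies $I_N(v-w)=v-w$, so that the same identity can be applied in reverse to return to the continuous $L^2$-norm of $v-w$. The only mild subtlety is to keep track of the fact that $V$ does not appear in the Lipschitz constant because $|e^{-i\tau V}|\equiv 1$, so the low regularity of $V$ does not enter the stability argument at all.
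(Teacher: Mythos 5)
Your proposal is correct and follows essentially the same route as the paper: peel off $e^{i\tau\Delta}$ and $P_N$ by $L^2$-boundedness, use $|e^{-i\tau V}|\equiv 1$ to drop the potential, apply the discrete Parseval identity $\|I_N\phi\|_{L^2}^2 = h\sum_{j=0}^{N-1}|\phi(x_j)|^2$ to reduce to nodal values, and conclude with the pointwise Lipschitz bound on $z\mapsto z e^{-i\tau\beta|z|^2}$. The only difference is that you write out the pointwise Lipschitz estimate explicitly, whereas the paper delegates it to the proof of (4.33) in \cite{bao2023_semi_smooth}.
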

	
	\begin{proof}
		Recalling \eqref{eq:Phi1_def}, we have
		\begin{equation}\label{eq:diff_stab}
			\Phit{1}(v) - \Phit{1}(w) = e^{i \tau \Delta} P_N \left (e^{- i \tau V} I_N\left(v e^{- i \tau f(|v|^2)} - w e^{- i \tau f(|w|^2)}\right) \right ). 
		\end{equation}
		From \eqref{eq:diff_stab}, by the boundedness of $e^{it\Delta}$ and $P_N$, noting that $I_N$ is an identity on $X_N$, and using
		\begin{equation}
			\| I_N \phi \|_{L^2}^2 = h\sum_{j=0}^{N-1} | (I_N \phi) (x_j) |^2 = h\sum_{j=0}^{N-1} | \phi(x_j) |^2, \quad \phi \in C_\text{per}(\overline{\Omega}), 
		\end{equation}
		we have (see the proof of (4.33) in \cite{bao2023_semi_smooth} for more details)
		\begin{align}\label{eq:diff_stab_L2}
			\| \Phit{1}(v) - \Phit{1}(w) \|_{L^2}
			&\leq \left \| I_N\left(v e^{- i \tau f(|v|^2)} \right) - I_N \left( w e^{- i \tau f(|w|^2)} \right) \right \|_{L^2} \notag\\
			&\leq (1+C(M)\tau) \| v - w \|_{L^2},	
		\end{align}
		which completes the proof. 
		\qed
	\end{proof}
	
	Combining Propositions \ref{prop:local_1} and \ref{prop:stability_1}, using standard Lady-Windermere's fan argument, we can prove \eqref{eq:LT_L2}. Note that we still need to establish a uniform $L^\infty$-bound of the solution $\psihn{n} \  (0 \leq n \leq T/\tau)$ to control the constant in the stability estimate Proposition \ref{prop:stability_1}, which can be done by using mathematical induction with inverse inequalities. We briefly show this process here. 
	\begin{proof}[\eqref{eq:LT_L2}]
		Let $e^n = \psi^n - I_N \psihn{n}$ for $0 \leq n \leq T/\tau$. By \eqref{thm:LTFS}, it suffices to obtain the error bounds for $e^n$. Recalling \eqref{eq:LTFS} and \eqref{LTEFP}, we have
		\begin{align}\label{eq:error_eq}
			e^{n+1} 
			= \psi^{n+1} - I_N \psihn{n+1} = \St{1}(\psi^n) - \Phit{1}(I_N \psihn{n}) \notag \\
			= \St{1}(\psi^n) - \Phit{1}(\psi^n) + \Phit{1}(\psi^n) - \Phit{1}(I_N \psihn{n}). 
		\end{align}
		From \eqref{eq:error_eq}, by triangle inequality and Propositions \ref{prop:local_1} and \ref{prop:stability_1}, we have
		\begin{equation}\label{eq:error_eq_L2}
			\| e^{n+1} \|_{L^2} \leq C(M_2) \tau h^2 + (1 + C(\| I_N \psihn{n} \|_{L^\infty}, M_2)\tau )\| e^n \|_{L^2}. 
		\end{equation}
		We use the induction argument to complete the proof. By standard interpolation and projection error estimates, we have
		\begin{equation}
			\| e^0 \|_{L^2} = \| \psi^0 - I_N \psihn{0} \|_{L^2} = \| P_N \psi_0 - I_N \psi_0 \|_{L^2} \lesssim h^2, \quad \| I_N \psihn{0} \|_{L^\infty} \leq M_2. 
		\end{equation}
		We assume that, for $0 \leq n \leq m \leq T/\tau-1$, 
		\begin{equation}\label{eq:assumption}
			\| e^n \|_{L^2} \lesssim h^2, \quad \| I_N \psihn{n} \|_{L^\infty} \leq 1+M_2. 
		\end{equation}
		We shall show that \eqref{eq:assumption} holds for $n=m+1$. 
		From \eqref{eq:error_eq_L2}, using discrete Gronwall's inequality, noting the $L^\infty$-bound of $I_N \psihn{n} \ (0 \leq  n \leq m)$ in \eqref{eq:assumption}, we have
		\begin{equation}
			\| e^{m+1} \|_{L^2} \lesssim h^2. 
		\end{equation}
		By inverse inequality $\| \phi \|_{L^\infty} \leq h^{-d/2} \| \phi \|_{L^2}$ for $\phi \in X_N$, we have, when $h<h_0$ for some $h_0>0$ small enough, 
		\begin{align}\label{eq:inverse}
			\| I_N \psihn{m+1} \|_{L^\infty} 
			&\leq \| e^{m+1} \|_{L^\infty} + \| P_N \psi^{m+1} - \psin{m+1} \|_{L^\infty} + \| \psin{m+1} \|_{L^\infty} \notag \\
			&\leq C h^{-d/2} \| e^{m+1} \|_{L^2}  + C h^{-d/2} \| P_N \psi^{m+1} - \psin{m+1} \|_{L^2} + M_2 \notag \\
			&\leq C h^{-d/2} h^2  + C h^{-d/2} h^2 + M_2 \leq 1+M_2, 
		\end{align}
		where $d$ is the spatial dimension, i.e. $d=1$ in the current case. Thus, \eqref{eq:assumption} holds for $n = m+1$, and for all $0 \leq n \leq T/\tau$ by mathematical induction, which proves the $L^2$-norm error bound in \eqref{eq:LT_L2} by noting \eqref{thm:LTFS}. The $H^1$-norm error bound in \eqref{eq:LT_L2} follows from
		the inverse inequality $\| \phi \|_{H^1} \lesssim h^{-1} \| \phi \|_{L^2}$ for $\phi \in X_N$ as 
		\begin{equation}
			\| e^n \|_{H^1} \lesssim h^{-1} \| e^n \|_{L^2} \lesssim h, \quad 0 \leq n \leq T/\tau, 
		\end{equation}
		which completes the proof. 
		\qed
	\end{proof}
	
	\subsection{Proof of the optimal $H^1$-norm error bound}
	In this subsection, we shall establish the optimal $H^1$-norm error bound \eqref{eq:LT_H1} under the assumptions that $ V \in W^{1, 4}(\Omega) \cap H^1_\text{per}(\Omega) $ and $ \psi \in C([0, T]; H^3_\text{\rm per}(\Omega)) \cap C^1([0, T]; H^1(\Omega)) $. Similar to the previous section, it suffices to estimate the error between $\psin{n}$ obtained from the LTFS method \eqref{eq:LTFS_scheme} and $\psihn{n}$ obtained from the LTeFP method \eqref{LTEFP_scheme}. 
	
	Similar to \eqref{eq:uniform_H2_bound}, by \eqref{thm:LTFS_2}, we have the following uniform $H^3$-bound of $\psi^n \ (0 \leq n \leq T/\tau)$ obtained from \eqref{LTEFP_scheme}: 
	\begin{equation}\label{eq:uniform_H3_bound}
		\| \psi^n \|_{H^3} \lesssim 1, \quad 0 \leq n \leq T/\tau. 
	\end{equation}
	
	\begin{proposition}[Local truncation error]\label{prop:local_2}
		Let $\phi \in X_N$ and $0<\tau<1$. When $V \in W^{1, 4}(\Omega) \cap H^1_\text{per}(\Omega)$ and $\sigma \geq 1$, we have
		\begin{equation*}
			\| \St{1}(\phi) - \Phit{1}(\phi) \|_{H^1} \leq C(\| V \|_{W^{1, 4}}, \| \phi \|_{H^3}) \tau h^2. 
		\end{equation*}
	\end{proposition}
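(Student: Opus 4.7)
The natural starting point is the identity from \eqref{diff} in the proof of Proposition \ref{prop:local_1}: factoring out $e^{-i\tau V}$ gives
\begin{equation*}
	\St{1}(\phi) - \Phit{1}(\phi) = e^{i\tau\Delta} P_N\bigl(e^{-i\tau V}(I - I_N) g\bigr), \qquad g := \phi\, e^{-i\tau f(|\phi|^2)}.
\end{equation*}
Because $V \in H^1_\text{per}(\Omega)$ makes $e^{-i\tau V}$ continuous and periodic, the integrand lies in $H^1_\text{per}(\Omega)$. Since $e^{i\tau\Delta}$ is an $H^1$-isometry and $P_N$ is an $H^1$-contraction on $H^1_\text{per}(\Omega)$, the problem reduces to showing $\|e^{-i\tau V}(I-I_N) g\|_{H^1} \lesssim \tau h^2$ with a constant depending only on $\|V\|_{W^{1,4}}$ and $\|\phi\|_{H^3}$. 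The key simplification is that $\phi \in X_N$ implies $I_N \phi = \phi$, so $(I-I_N) g = (I-I_N) h$ with $h := \phi (e^{-i\tau f(|\phi|^2)} - 1)$; Lemma \ref{lem:Hm_bound} at level $m = 3$ then yields $\|h\|_{H^3} \leq C(\|\phi\|_{H^3})\tau$, supplying the needed $\tau$ factor at the outset.

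Next I would split the $H^1$ norm of $u := e^{-i\tau V}(I-I_N)h$ into its $L^2$ part and its derivative part. The $L^2$ part is immediate: since $|e^{-i\tau V}| = 1$, the standard Fourier interpolation error gives $\|u\|_{L^2} \leq \|(I-I_N)h\|_{L^2} \lesssim h^3 \|h\|_{H^3} \leq C\tau h^3$, which is already $\leq C\tau h^2$. For the derivative, the product rule gives
\begin{equation*}
	\partial_x u = -i\tau\, V'\, e^{-i\tau V}(I-I_N) h + e^{-i\tau V}\,\partial_x (I-I_N) h.
\end{equation*}
The second term is easy: $\|\partial_x (I-I_N) h\|_{L^2} \leq \|(I-I_N) h\|_{H^1} \lesssim h^2 \|h\|_{H^3} \leq C\tau h^2$.

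The delicate term is the first one, and it is here that the low regularity of $V$ is felt: $V'$ only lies in $L^4$, not $L^\infty$, so the multiplier cannot simply be pulled outside. My plan is to invoke H\"older with conjugate exponents $4$ and $4$ together with the one-dimensional Sobolev embedding $H^{1/2}(\Omega) \hookrightarrow L^4(\Omega)$:
\begin{equation*}
	\tau \|V'(I-I_N) h\|_{L^2} \leq \tau \|V'\|_{L^4} \|(I-I_N) h\|_{L^4} \lesssim \tau \|V'\|_{L^4}\, h^{5/2} \|h\|_{H^3} \leq C\tau^2 h^{5/2},
\end{equation*}
which is absorbed into $C\tau h^2$ since $\tau \leq 1$ and $h \leq 1$. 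Collecting the pieces completes the estimate. The principal obstacle is precisely this exchange: trading half a power of $h$, via the embedding $H^{1/2}\hookrightarrow L^4$, for the missing $L^\infty$ control of $V'$. It is affordable only because Lemma \ref{lem:Hm_bound} has already produced one extra $\tau$ from the vanishing of $e^{-i\tau f(|\phi|^2)} - 1$ as $\tau \to 0$; without this $\tau$ in hand, no amount of Sobolev trading would reach the target $\tau h^2$.
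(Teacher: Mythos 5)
Your argument is correct and follows essentially the same route as the paper's proof: reduce to $\| e^{-i\tau V}(I-I_N)(\phi(e^{-i\tau f(|\phi|^2)}-1)) \|_{H^1}$ via the boundedness of $e^{i\tau\Delta}$ and $P_N$ and the identity $I_N\phi=\phi$, then combine the $O(h^2)$ interpolation error in $H^1$ with Lemma \ref{lem:Hm_bound} at $m=3$. The only difference is that the paper invokes the product estimate $\|vw\|_{H^1}\lesssim\|v\|_{W^{1,4}}\|w\|_{H^1}$ as a black box, whereas you reprove it inline by the product rule, H\"older, and the Sobolev embedding $H^{1/2}\hookrightarrow L^4$ — the same mechanism, just unpacked.
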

	
	\begin{proof}
		From \eqref{diff}, using the boundedness of $e^{it\Delta}$ and $P_N$, standard projection error estimates of $I_N$ \cite{book_spectral}, Lemma \ref{lem:Hm_bound}, and the product estimate
		\begin{equation}\label{eq:product_est}
			\| v w \|_{H^1} \lesssim \| v \|_{W^{1, 4}} \| w \|_{H^1}, 
		\end{equation}
		noting that $I_N$ is an identity on $X_N$, we have
		\begin{align*}
			\| \St{1}(\phi) - \Phit{1}(\phi) \|_{H^1} 
			&\leq \left \| e^{-i\tau V}(I-I_N) \left(\phi e^{-i\tau f(|\phi|^2)} \right) \right \|_{H^1} \\
			&\lesssim \| e^{-i\tau V} \|_{W^{1, 4}} \left \| (I-I_N) \left(\phi e^{-i\tau f(|\phi|^2)} \right) \right \|_{H^1} \\
			&\leq C(\| V \|_{W^{1, 4}}) \left \| (I-I_N) \left (\phi (e^{-i\tau f(|\phi|^2)} - 1)\right ) \right \|_{H^1} \\
			&\leq C(\| V \|_{W^{1, 4}}) h^2 \| \phi (e^{-i\tau f(|\phi|^2)} - 1) \|_{H^3} \notag \\
			&\leq \tau h^2 C(\| V \|_{W^{1, 4}}, \| \phi \|_{H^3}),  
		\end{align*}
		which completes the proof. 
		\qed
	\end{proof}
	
	\begin{proposition}[Stability]\label{prop:stability_2}
		Let $0<\tau<1$ and $v, w \in X_N$ such that $\| v \|_{L^\infty} \leq M $, $\| w \|_{L^\infty} \leq M$ and $\| v \|_{H^3} \leq M_1$. When $ V \in W^{1, 4}(\Omega) \cap H^1_\text{per}(\Omega)$, we have 
		\begin{equation*}
			\| \Phit{1}(v) - \Phit{1}(w) \|_{H^1} \leq (1+C(\| V \|_{W^{1, 4}}, M, M_1) \tau) \| v- w \|_{H^1}. 
		\end{equation*}
	\end{proposition}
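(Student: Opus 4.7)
The plan is to carry out an $H^1$-stability analysis that parallels the $L^2$-stability proof of Proposition~\ref{prop:stability_1}, tracking carefully the fact that both $P_N$ and $e^{i\tau\Delta}$ are contractions on every periodic Sobolev scale, so that only the multiplication by $e^{-i\tau V}$ and the nonlinear block $g(\phi):=\phi e^{-i\tau f(|\phi|^2)}$ contribute to the growth of the constant.

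First, I would start from
\[\Phit{1}(v)-\Phit{1}(w)=e^{i\tau\Delta}P_N\bigl(e^{-i\tau V}\,I_N(g(v)-g(w))\bigr),\]
observe that $\|e^{i\tau\Delta}\phi\|_{H^1}=\|\phi\|_{H^1}$ and $\|P_N\phi\|_{H^1}\le\|\phi\|_{H^1}$, and reduce matters to estimating $\|e^{-i\tau V}\,I_N(g(v)-g(w))\|_{H^1}$. To peel off the potential, I would write $e^{-i\tau V}=1+(e^{-i\tau V}-1)$ and apply the product estimate \eqref{eq:product_est}; a direct computation from $|e^{i\alpha}-1|\le|\alpha|$ yields $\|e^{-i\tau V}-1\|_{W^{1,4}}\le\tau C(\|V\|_{W^{1,4}})$, so the $(e^{-i\tau V}-1)$-contribution is already of order $\tau$ times $\|I_N(g(v)-g(w))\|_{H^1}$ and can be absorbed into the final $(1+C\tau)$ factor once the main piece is bounded.

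The heart of the proof is then the estimate
\[\|I_N(g(v)-g(w))\|_{H^1}\le(1+C\tau)\|v-w\|_{H^1}.\]
I would exploit the key identity $I_N(v-w)=v-w$ (valid because $v,w\in X_N$) by splitting
\[g(v)-g(w)=(v-w)+\bigl(h(v)-h(w)\bigr),\qquad h(\phi):=\phi\bigl(e^{-i\tau f(|\phi|^2)}-1\bigr),\]
so that applying $I_N$ reproduces $v-w$ exactly and leaves $I_N(h(v)-h(w))$ as the $O(\tau)$ correction. A further decomposition
\[h(v)-h(w)=(v-w)(e^{-i\tau f(|v|^2)}-1)+w\bigl(e^{-i\tau f(|v|^2)}-e^{-i\tau f(|w|^2)}\bigr)\]
exposes an explicit factor of $\tau$ in each summand via $|e^{i\alpha}-1|\le|\alpha|$ and $|e^{i\alpha}-e^{i\beta}|\le|\alpha-\beta|$. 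Leibniz, the product estimate \eqref{eq:product_est}, and the 1D Sobolev embeddings $H^1\hookrightarrow L^\infty$ and $H^3\hookrightarrow W^{2,\infty}$ reduce the $H^1$-norm of each summand to $\tau\|v-w\|_{H^1}$, with constants depending on $M$, $M_1$ and $|\beta|$. A standard $I_N$-stability bound of the form $\|I_N\phi-\phi\|_{H^1}\lesssim h\|\phi\|_{H^2}$ combined with the inverse inequality $\|v-w\|_{H^2}\lesssim h^{-1}\|v-w\|_{H^1}$ on the $X_N$-valued blocks transfers this estimate across $I_N$ without producing a spurious $h^{-1}$.

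The main obstacle I expect lies in the second summand of $h(v)-h(w)$: differentiating the difference $e^{-i\tau f(|v|^2)}-e^{-i\tau f(|w|^2)}$ produces the term $\nabla|v|^2-\nabla|w|^2=2\,\mathrm{Re}\bigl(\bar v\,\nabla(v-w)+\overline{(v-w)}\,\nabla w\bigr)$, and one must arrange Leibniz so that every remaining $\nabla$ lands on a factor controlled in $L^\infty$ (namely on $v$, using $\|v\|_{H^3}\le M_1$ together with the 1D embedding $H^3\hookrightarrow W^{2,\infty}$) rather than on the difference $v-w$, which is only controlled in $H^1$. This is precisely where the $H^3$-assumption on $v$ is invoked. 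Once this bookkeeping is done, assembling all $O(\tau)$ contributions and absorbing the resulting $O(\tau^2)$ cross-terms yields the claimed $(1+C(\|V\|_{W^{1,4}},M,M_1)\tau)$ factor.
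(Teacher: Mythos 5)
Your overall architecture matches the paper's: reduce via the isometry of $e^{i\tau\Delta}$ and the $H^1$-boundedness of $P_N$ to the product $e^{-i\tau V}W$ with $W=I_N\bigl(ve^{-i\tau f(|v|^2)}-we^{-i\tau f(|w|^2)}\bigr)$, peel off the potential at cost $(1+C(\|V\|_{W^{1,4}})\tau)$, and then show $\|W\|_{H^1}\le(1+C(M,M_1)\tau)\|v-w\|_{H^1}$. Your treatment of the potential (writing $e^{-i\tau V}=1+(e^{-i\tau V}-1)$ and using \eqref{eq:product_est} with $\|e^{-i\tau V}-1\|_{W^{1,4}}\le\tau C(\|V\|_{W^{1,4}})$) is a correct and essentially equivalent variant of the paper's direct expansion \eqref{eq:grad_sep}. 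Where you genuinely diverge is the core estimate \eqref{eq:W_est}: the paper does not prove it by continuous Leibniz calculus at all, but invokes the finite difference quotient technique of Proposition 4.8 / (4.34) in \cite{bao2023_semi_smooth}, which works with $\delta_x^+W$ at the grid points and a discrete product rule, precisely so that no genuine derivative of $w$ or second derivative of anything ever appears. Your route --- continuous Leibniz plus the transfer $\|I_N\phi\|_{H^1}\le\|\phi\|_{H^1}+Ch\|\phi\|_{H^2}$ plus inverse inequalities --- is a legitimate alternative in 1D, but it is exactly where your sketch is thinnest.

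The concrete gap: you locate the difficulty in derivatives landing on $v-w$, but $\nabla(v-w)$ in $L^2$ is bounded by $\|v-w\|_{H^1}$, which is the quantity you want on the right-hand side, so that is harmless. The real obstruction is every occurrence of $\nabla w$ (and, for your $H^2$-transfer step, $\nabla^2 w$): the hypotheses give only $\|w\|_{L^\infty}\le M$, so $\|\nabla w\|_{L^2}$ is not bounded by a constant, and your second summand $w\bigl(e^{-i\tau f(|v|^2)}-e^{-i\tau f(|w|^2)}\bigr)$ produces $\nabla w$ both from the prefactor and from $\nabla|w|^2$ inside the exponential. Your sketch never says how these are controlled. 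They can be rescued --- either by rewriting $\nabla w=\nabla v-\nabla(v-w)$, or by bounding $\|\nabla w\|_{L^2}\le M_1+\|v-w\|_{H^1}$ and then taming the resulting quadratic term $\tau\|v-w\|_{H^1}^2$ using $\|v-w\|_{L^\infty}\le 2M$ on one factor --- but this bookkeeping is the entire content of the lemma and is precisely what the paper outsources to the discrete argument of \cite{bao2023_semi_smooth}. The same issue recurs, amplified, in your $h\|\cdot\|_{H^2}$ interpolation transfer, where $\|\partial^2 w\|_{L^2}\lesssim h^{-1}\|\partial w\|_{L^2}$ must be paired with a cofactor that is simultaneously $O(\tau)$ and $O(\|v-w\|)$ for the powers of $h$ to cancel without losing the $\|v-w\|_{H^1}$ factor. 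As written, the proposal asserts the conclusion of this step rather than establishing it.
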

	
	\begin{proof}
		From \eqref{eq:diff_stab}, by the boundedness of $e^{i\tau\Delta}$ and $P_N$, we have
		\begin{align}\label{eq:diff_stab_H1}
			\| \Phit{1}(v) - \Phit{1}(w) \|_{H^1} 
			&\leq \left \| e^{- i \tau V} I_N\left(v e^{- i \tau f(|v|^2)} - w e^{- i \tau f(|w|^2)} \right) \right \|_{H^1} \notag \\
			&= \left \| e^{- i \tau V} W \right \|_{H^1}, 
		\end{align}
		where 
		\begin{equation}
			W = I_N\left(v e^{- i \tau f(|v|^2)} - w e^{- i \tau f(|w|^2)} \right). 
		\end{equation}
		By triangle inequality, H\"older's inequality and Sobolev embedding $H^1 \hookrightarrow L^4$, we have
		\begin{align}\label{eq:grad_sep}
			\left \| e^{- i \tau V} W \right \|_{H^1}^2 
			&=\left \| W \right \|_{L^2}^2 + \left \| (\nabla W - i \tau W \nabla V) e^{-i \tau V} \right \|_{L^2}^2 \notag \\
			&\leq \| W \|^2_{H^1} + 2 \tau \| \nabla W \|_{L^2}\| W \nabla V \|_{L^2} + \tau^2 \| W \nabla V \|_{L^2}^2 \notag \\
			&\leq \| W \|^2_{H^1} + 2 \tau \| W \|_{H^1}\| W \|_{L^4} \| \nabla V \|_{L^4} + \tau^2 \| W \|_{L^4}^2 \| \nabla V \|_{L^4}^2 \notag \\
			&\leq \| W \|^2_{H^1} + C_1(\| V \|_{W^{1, 4}}) \tau \| W \|_{H^1}^2 + C_2(\| V \|_{W^{1, 4}})\tau^2 \| W \|_{H^1}^2,  
		\end{align}
		which implies from \eqref{eq:diff_stab_H1} that 
		\begin{equation}\label{eq:reduce}
			\| \Phit{1}(v) - \Phit{1}(w) \|_{H^1} \leq \left \| e^{- i \tau V} W \right \|_{H^1} \leq (1 + C(\| V \|_{W^{1, 4}}) \tau ) \| W \|_{H^1}. 
		\end{equation}
		The estimate of $\| W \|_{H^1}$ can be obtained by using the finite difference operator as in Proposition 4.8 of \cite{bao2023_semi_smooth}. In fact, following the proof for (4.34) in \cite{bao2023_semi_smooth} with $V(x) \equiv 0$, we have
		\begin{equation}\label{eq:W_est}
			\| W \|_{H^1} \leq (1 + C(M, M_1)\tau) \| v- w \|_{H^1}, 
		\end{equation} 
		which plugged into \eqref{eq:reduce} completes the proof. 
		\qed 
	\end{proof}
	
	\begin{remark}
		Note that, in Proposition \ref{prop:stability_2}, we have stronger assumption $\| v \|_{H^3} \leq M_1$ instead of $\| v \|_{H^2} \leq M_1$ assumed in Proposition 4.8 of \cite{bao2023_semi_smooth}, which will simplify the proof of Proposition 4.8 of \cite{bao2023_semi_smooth} in 2D and 3D. 
	\end{remark}
	
	Following the proof of \eqref{eq:LT_L2}, the proof of \eqref{eq:LT_H1} can be completed by directly using Lady-Windermere's fan argument with Propositions \ref{prop:local_2} and \ref{prop:stability_2}. Since we have established a uniform $L^\infty$-bound of $I_N \psihn{n} \ (0 \leq n \leq T/\tau)$ in the proof of \eqref{eq:LT}, the constant $C$ in Proposition \ref{prop:stability_2} is already under control, and thus the induction argument is not needed. We do not detail the process here for brevity. 
	
	
	\begin{remark}[Generalization to the Strang splitting]\label{rem:Strang}
		By using the eFP method to discretize the second-order Strang splitting in space, one can similarly obtain the Strang time-splitting extended Fourier pseudospectral (\textit{STeFP}) method
		\begin{equation}\label{STEFP_scheme}
			\begin{aligned}
				&\psi^{(1)}_j = \sum_{l \in \mathcal{T}_N} e^{- i \frac{\tau}{2} \mu_l^2} \widetilde{(\psihn{n})}_l e^{i \mu_l(x_j-a)}, \quad j \in \mathcal{T}_N^0, \\
				&\psi^{(2)}(x) = e^{- i \tau V(x)} I_N \left( \psi^{(1)} e^{-i \tau f(|\psi^{(1)}|^2)} \right)(x), \quad x \in \Omega, \\
				&\psihn{n+1}_j = \sum_{l \in \mathcal{T}_N} e^{- i \frac{\tau}{2} \mu_l^2} \widehat{(\psi^{(2)})}_l e^{i \mu_l(x_j-a)}, \quad j \in \mathcal{T}_N^0,  
			\end{aligned}
			\quad n \geq 0, 
		\end{equation}
		where $\psihn{0}_j = \psi_0(x_j)$ for $j \in \mathcal{T}_N^0$. The computation of $\widehat{(\psi^{(2)})}_l$ can be done in a manner similar to that presented in \eqref{eq:extended_FFT} for $\widehat{(\psi^{(1)})}_l$. 
		The optimal error bounds on the STeFP method also follow immediately: For $ \psihn{n} \ (n \geq 0) $ obtained from the second-order STeFP method \eqref{STEFP_scheme}, we have
		\begin{theorem}\label{thm:STEFP}
			Assume that $ V \in H_\text{\rm per}^2(\Omega) $ and $ \psi \in C([0, T]; H^4_\text{\rm per}(\Omega)) \cap C^1([0, T]; H^2(\Omega)) \cap C^2([0, T]; L^2(\Omega)) $. There exists $ h_0 > 0 $ sufficiently small such that when $ 0 < h < h_0 $ and $ \tau \leq h^2/\pi $, we have 
			\begin{equation}\label{eq:ST_L2}
				\| \psi(\cdot, t_n) - I_N \psihn{n} \|_{L^2} \lesssim \tau^2 + h^4, \quad \| \psi(\cdot, t_n) - I_N \psihn{n} \|_{H^1} \lesssim \tau^\frac{3}{2} + h^3, \quad 0 \leq n \leq T/\tau.  
			\end{equation}
			In addition, if $ V \in H^3_\text{\rm per}(\Omega) $ and $ \psi \in C([0, T]; H^5_\text{\rm per}(\Omega)) \cap C^1([0, T]; H^3(\Omega)) \cap C^2([0, T]; H^1(\Omega)) $, we have 
			\begin{equation}\label{eq:ST_H1}
				\| \psi(\cdot, t_n) - I_N \psihn{n} \|_{H^1} \lesssim \tau^2 + h^4, \quad 0 \leq n \leq T/\tau. 
			\end{equation}
		\end{theorem}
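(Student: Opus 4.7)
\textbf{Proof plan for Theorem \ref{thm:STEFP}.} The strategy is to mirror the proof of Theorem \ref{thm:LTEFP}. I reduce the analysis to bounding the difference between the STeFP iterate $I_N\psihn{n}$ and the Strang time-splitting Fourier spectral (STFS) iterate $\psi^n$, obtained by replacing $I_N$ in \eqref{STEFP_scheme} with $P_N$. The optimal STFS error bounds under the stated regularity, namely $\|\psi(\cdot,t_n)-\psi^n\|_{L^2}\lesssim \tau^2+h^4$ under the assumptions of \eqref{eq:ST_L2} and $\|\psi(\cdot,t_n)-\psi^n\|_{H^1}\lesssim \tau^2+h^4$ under the assumptions of \eqref{eq:ST_H1}, are the Strang analogs of \eqref{thm:LTFS}--\eqref{thm:LTFS_2} and are available from \cite{bao2023_improved}. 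Together with the inverse inequality and $\tau\lesssim h^2$, they produce uniform $H^4$- and $H^5$-bounds on $\psi^n$ exactly as in \eqref{eq:uniform_H2_bound}.

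To estimate $e^n:=\psi^n - I_N\psihn{n}$, I introduce the Strang numerical flows on $X_N$,
\begin{equation*}
\St{2}(\phi) := e^{i\tau\Delta/2} P_N\Phi_B^\tau(e^{i\tau\Delta/2}\phi),\qquad
\Phit{2}(\phi) := e^{i\tau\Delta/2} P_N\left(e^{-i\tau V} I_N(\tilde\phi\, e^{-i\tau f(|\tilde\phi|^2)})\right),
\end{equation*}
where $\tilde\phi:=e^{i\tau\Delta/2}\phi\in X_N$. Then $\psi^{n+1}=\St{2}(\psi^n)$ and $I_N\psihn{n+1}=\Phit{2}(I_N\psihn{n})$. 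As in the proof of Proposition \ref{prop:local_1}, using $I_N\tilde\phi=\tilde\phi$ together with the $L^2$-boundedness of $e^{i\tau\Delta/2}$, $P_N$, and multiplication by $e^{-i\tau V}$, the difference reduces to
\begin{equation*}
\|\St{2}(\phi)-\Phit{2}(\phi)\|_{L^2}\leq \|(I-I_N)(\tilde\phi(e^{-i\tau f(|\tilde\phi|^2)}-1))\|_{L^2}\leq C(\|\phi\|_{H^4})\tau h^4,
\end{equation*}
via the interpolation estimate $\|(I-I_N)u\|_{L^2}\lesssim h^4\|u\|_{H^4}$ and the $m=4$ extension of Lemma \ref{lem:Hm_bound}. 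The $L^2$-stability $\|\Phit{2}(v)-\Phit{2}(w)\|_{L^2}\leq(1+C(M)\tau)\|v-w\|_{L^2}$ is the immediate Strang counterpart of Proposition \ref{prop:stability_1}. A Lady-Windermere's fan argument combined with the induction on the $L^\infty$-bound of $I_N\psihn{n}$ through the inverse inequality, exactly as in \eqref{eq:inverse}, then gives the $L^2$-bound $\tau^2+h^4$ in \eqref{eq:ST_L2}; the $H^1$-bound $\tau^{3/2}+h^3$ follows by applying $\|\cdot\|_{H^1}\lesssim h^{-1}\|\cdot\|_{L^2}$ to $e^n$ under $\tau\lesssim h^2$.

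For the stronger bound \eqref{eq:ST_H1} I run the same argument directly in $H^1$: the local truncation estimate sharpens to $\|\St{2}(\phi)-\Phit{2}(\phi)\|_{H^1}\leq C(\|V\|_{H^3},\|\phi\|_{H^5})\tau h^4$ by invoking $\|(I-I_N)u\|_{H^1}\lesssim h^4\|u\|_{H^5}$ and the product bound \eqref{eq:product_est} with $V\in H^3\hookrightarrow W^{1,4}$ in 1D, and the $H^1$-stability mirrors Proposition \ref{prop:stability_2}; a Lady-Windermere's fan argument closes the proof, with no induction required since the $L^\infty$-bound on $I_N\psihn{n}$ was already secured in the $L^2$-step. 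The main technical obstacle I anticipate is the higher-order analog of Lemma \ref{lem:Hm_bound}, namely $\|\tilde\phi(e^{-i\tau f(|\tilde\phi|^2)}-1)\|_{H^m}\leq C(\|\tilde\phi\|_{H^m})\tau$ for $m=4,5$. This should follow from the Taylor expansion of the exponential together with the algebra property of $H^m(\Omega)$ in 1D for $m\geq 1$, but the bookkeeping of the constants through Fa\`a di Bruno--type multinomial expansions is the only nontrivial piece. All remaining steps are routine adaptations of the first-order proof.
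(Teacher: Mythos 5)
Your proposal is correct and takes essentially the same route as the paper, which states Theorem \ref{thm:STEFP} inside Remark \ref{rem:Strang} without a separate proof, asserting that the bounds ``follow immediately'' by repeating the Lie--Trotter analysis (STFS error bounds from \cite{bao2023_improved}, local truncation error via the interpolation estimate and Lemma \ref{lem:Hm_bound}, stability, Lady-Windermere's fan with the $L^\infty$-induction) --- exactly the argument you outline. The one quibble is that the ``main technical obstacle'' you anticipate is vacuous: Lemma \ref{lem:Hm_bound} is already stated and proved for all $m \geq 2$ using only the algebra property of $H^m(\Omega)$ and the exponential series, so the $m=4,5$ cases require no Fa\`a di Bruno bookkeeping.
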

	\end{remark}
	
	\begin{remark}[Generalization to other nonlinearity]
		The results in the current paper can be easily generalized to the GPE \eqref{NLSE} with more general nonlinearity of the form $f(|\psi|^2)\psi$ with $f(\rho) = \beta \rho^{\sigma}$ or $f(\rho) = \beta \rho^{\sigma} \ln \rho$ under suitable assumptions on $\sigma$ as considered in \cite{bao2023_improved,bao2023_EWI,bao2023_semi_smooth}. 
	\end{remark}
	
	\section{Numerical results}\label{sec:numerical experiments}
	In this section, we present some numerical results of applying time-splitting eFP methods \eqref{LTEFP_scheme} and \eqref{STEFP_scheme} to solve the GPE \eqref{NLSE} with low regularity potential. In the following, we fix $ d=1 $, $ \Omega = (-16, 16) $, $ T=1 $ and choose a Gaussian type initial datum
	\begin{equation}\label{eq:ini}
		\psi_0(x) = e^{-x^2/2}, \quad x \in \Omega. 
	\end{equation}
	To quantify the error, we introduce the following error functions:
	\begin{align*}
		e_{L^2}(t_n) := \| \psi(\cdot, t_n) - \psi^n  \|_{L^2}, \quad  e_{H^1}(t_n) := \| \psi(\cdot, t_n) - \psi^n \|_{H^1}, \quad 0 \leq n \leq T/\tau. 
	\end{align*}
	We consider four potential functions $V_j \ ( j  = 1, 2, 3, 4) $ of different regularities given by
	\begin{equation}\label{eq:poten}
		\begin{aligned}
			&V_1(x) = \left\{
			\begin{aligned}
				&0, &x \in (-4, 4) \\
				&10, &\text{otherwise}
			\end{aligned}
			\right.
			, \quad &&V_2(x) = |x|^{0.76}, \\
			&V_3(x) = |x|^{1.51} \left (1- \frac{x^2}{16^2} \right )^2, \quad &&V_4(x) = |x|^{2.51} \left (1- \frac{x^2}{16^2} \right )^3, 
		\end{aligned}
		\qquad x \in \Omega. 
	\end{equation}
	Note that the potential functions $V_j \ ( j  = 1, 2, 3, 4)$ defined in \eqref{eq:poten} satisfy $ V_1 \in L^\infty(\Omega) $, $V_2 \in W^{1, 4}(\Omega) \cap H^1_\text{per}(\Omega)$, $ V_3 \in H^2_\text{per}(\Omega) $ and $ V_4 \in H^3_\text{per}(\Omega) $. 
	
	The `exact' solutions are computed by the STeFP method \eqref{STEFP_scheme} with $ \tau = \tau_\text{e} := 10^{-6} $ and $ h = h_\text{e} := 2^{-9} $.  
	
	\subsection{Spatial errors}
	In this subsection, we show the spatial errors of the eFP method for the GPE with different potentials $V = V_j \ (j = 1, 2, 3, 4)$. We also carry out comparisons with the Fourier spectral method with quadrature (FSwQ) for spatial discretization. 
	
	Since we only care about spatial errors in this subsection, the choice of temporal discretization does not matter, and we only use the Strang splitting for temporal discretization. In computation, we fix $\tau = \tau_{\text e}$ such that the temporal errors are negligible compared to the spatial errors, and choose mesh size $h=1/N$ with $N$ ranging from $2^7$ to $2^{11}$. 
	
	We start with the $L^\infty$-potential $V = V_1$ in \eqref{NLSE}. In Figure \ref{fig:Linfty_poten}, we plot the errors in $L^2$- and $H^1$-norm for the eFP and the FSwQ methods. We use FSwQ-$M$ to indicate that we are applying the FSwQ with $M \geq N$ quadrature points $\{x^M_j\}_{j=0}^{M-1}$. Hence, the computational cost of FSwQ-$M$ is at $O(M \log M)$. In comparison, the computational cost of the eFP method is at $O(N \log N)$.  
	
	We observe that the eFP method converges with $2.5$ order in $L^2$-norm and $1.5$ order in $H^1$-norm. Such convergence orders are not surprising by recalling Remark \ref{rem:spatial_error} since the exact solution in this case has regularity roughly $H^{2.5}$. While, for the FSwQ method, the numerical results suggest that the error is at $O(1/N^{2.5}) + O(1/M)$ in $L^2$-norm and at $O(1/N^{1.5}) + O(1/M)$ in $H^1$-norm. Hence, to obtain the same convergence rate in $L^2$-norm as the eFP method, one shall use FSwQ with $M \sim N^{2.5}$ quadrature points, which is extremely time-consuming. 
	\begin{figure}[htbp]
		\centering
		{\includegraphics[height=\figheight\textheight]{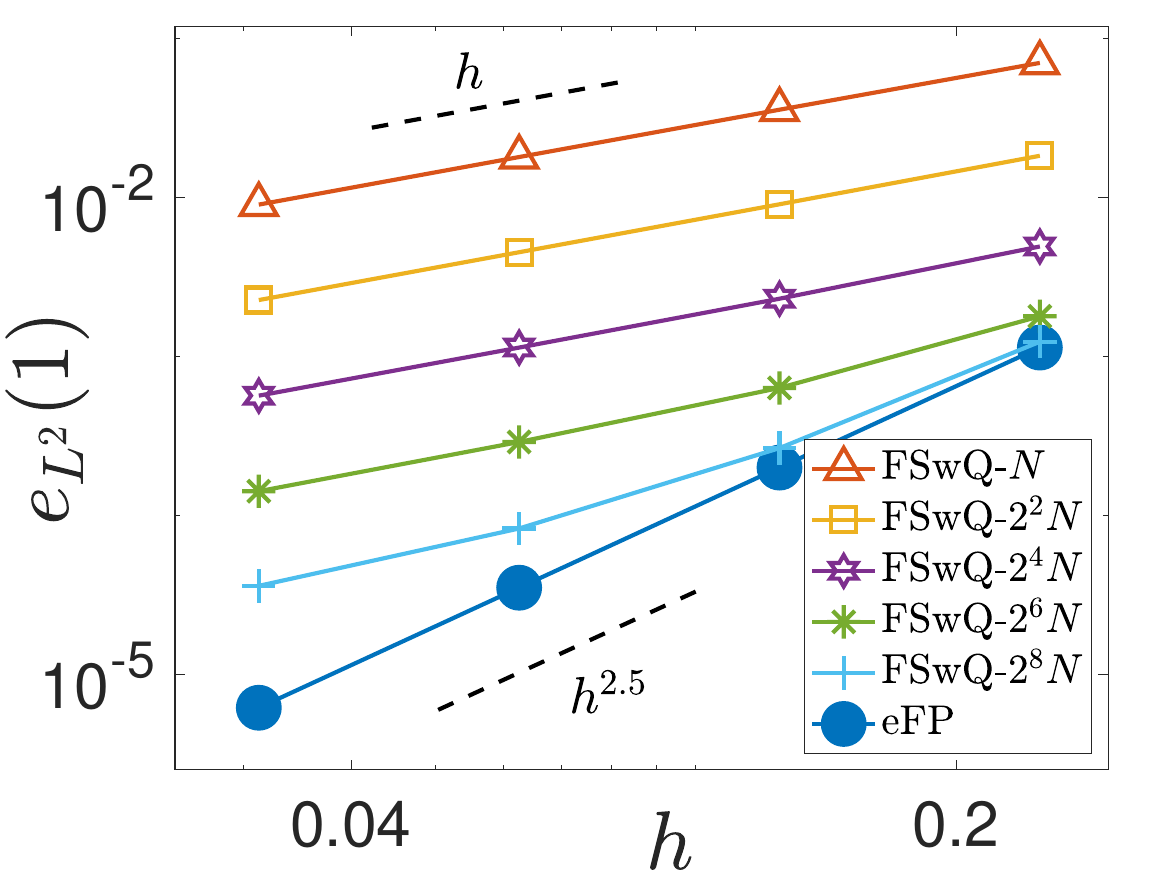}}\hspace{1em}
		{\includegraphics[height=\figheight\textheight]{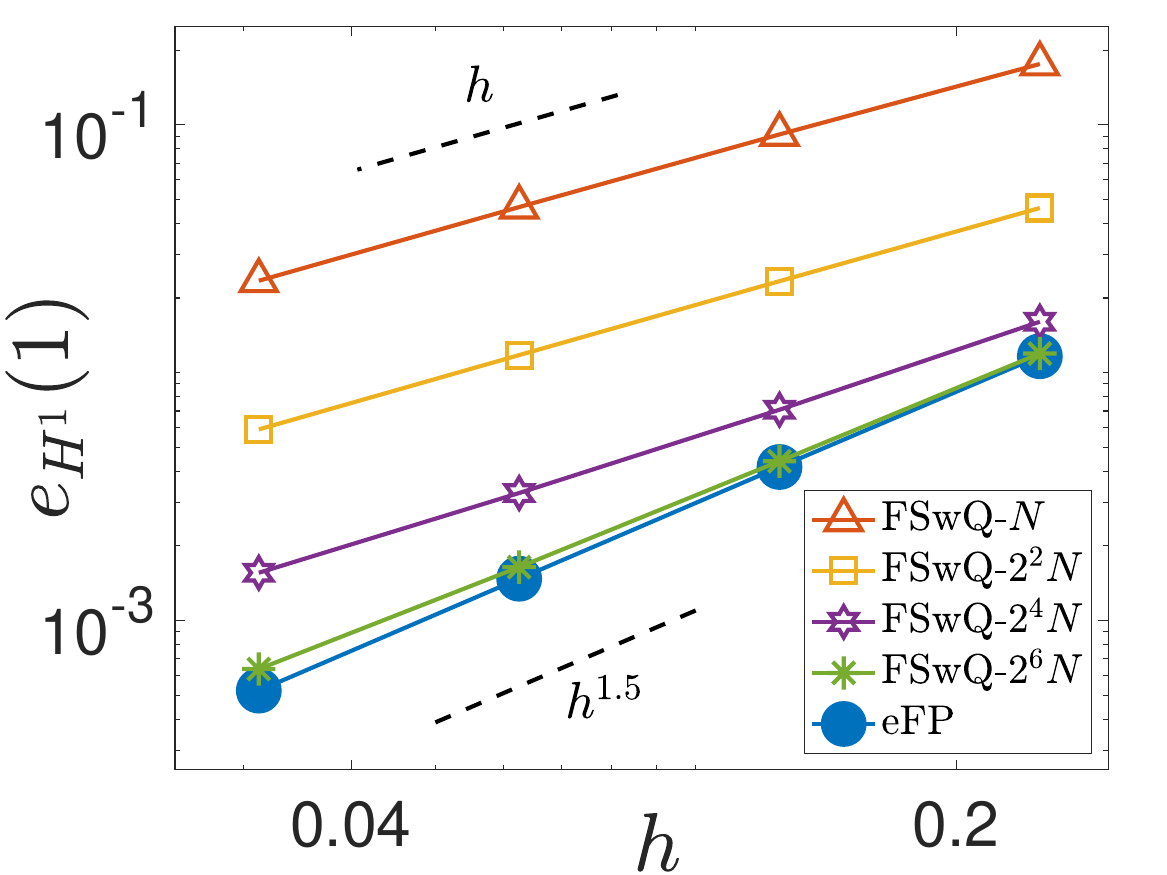}}
		\caption{Spatial errors in $L^2$- and $H^1$-norm of the eFP method for the GPE \eqref{NLSE} with $L^\infty$-potential $V = V_1$.}
		\label{fig:Linfty_poten}
	\end{figure}
	
	In Figures \ref{fig:W14_poten}-\ref{fig:H3_poten}, we exhibit corresponding results for $V = V_2$, $V=V_3$ and $V=V_4$, respectively. Similarly, in all the cases, the eFP method demonstrates optimal spatial convergence orders in both $L^2$- and $H^1$-norm consistent with the regularity of the exact solution. The orders of the eFP method are also higher than the FSwQ method for any fixed ratio of $M$ to $N$ in either $L^2$-norm or $H^1$-norm. However, the advantage of the eFP method diminishes when dealing with high regularity potential compared to its superiority in the presence of low regularity potential. 
	\begin{figure}[htbp]
		\centering
		{\includegraphics[height=\figheight\textheight]{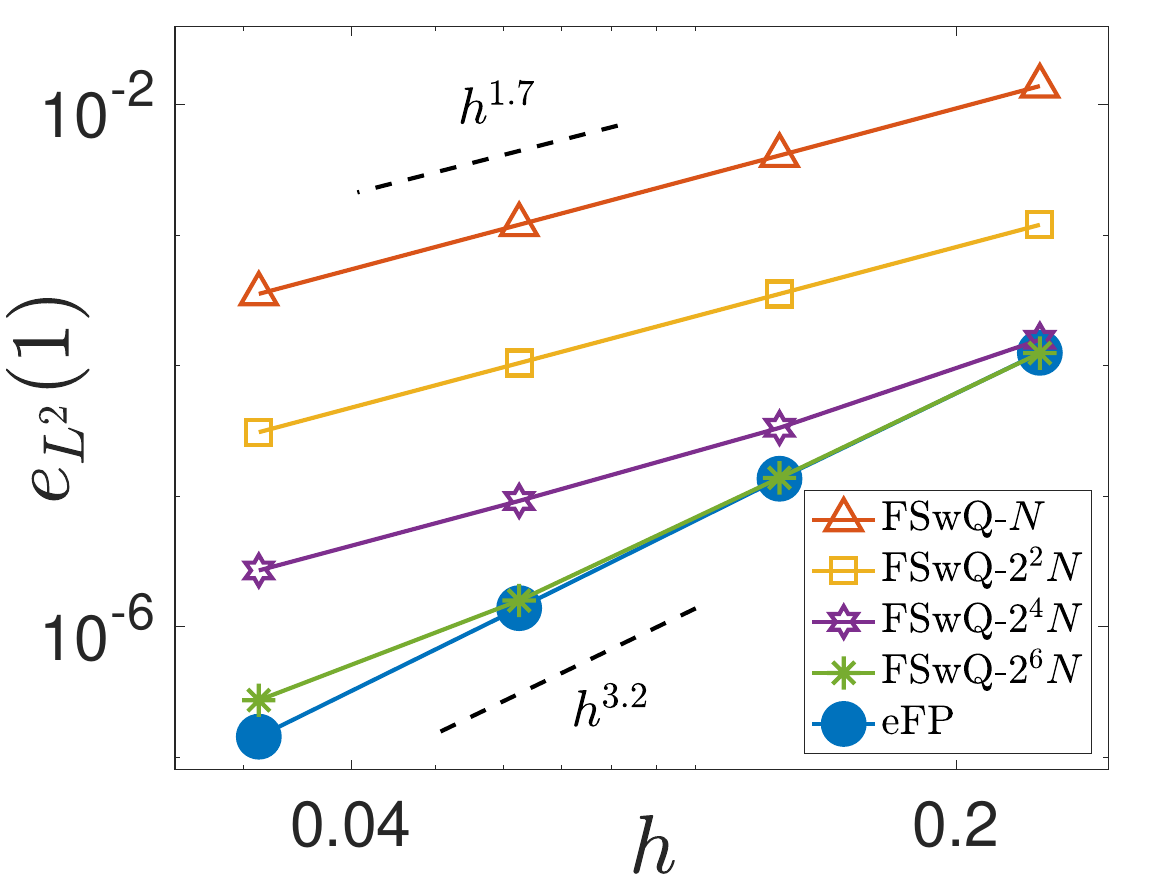}}\hspace{1em}
		{\includegraphics[height=\figheight\textheight]{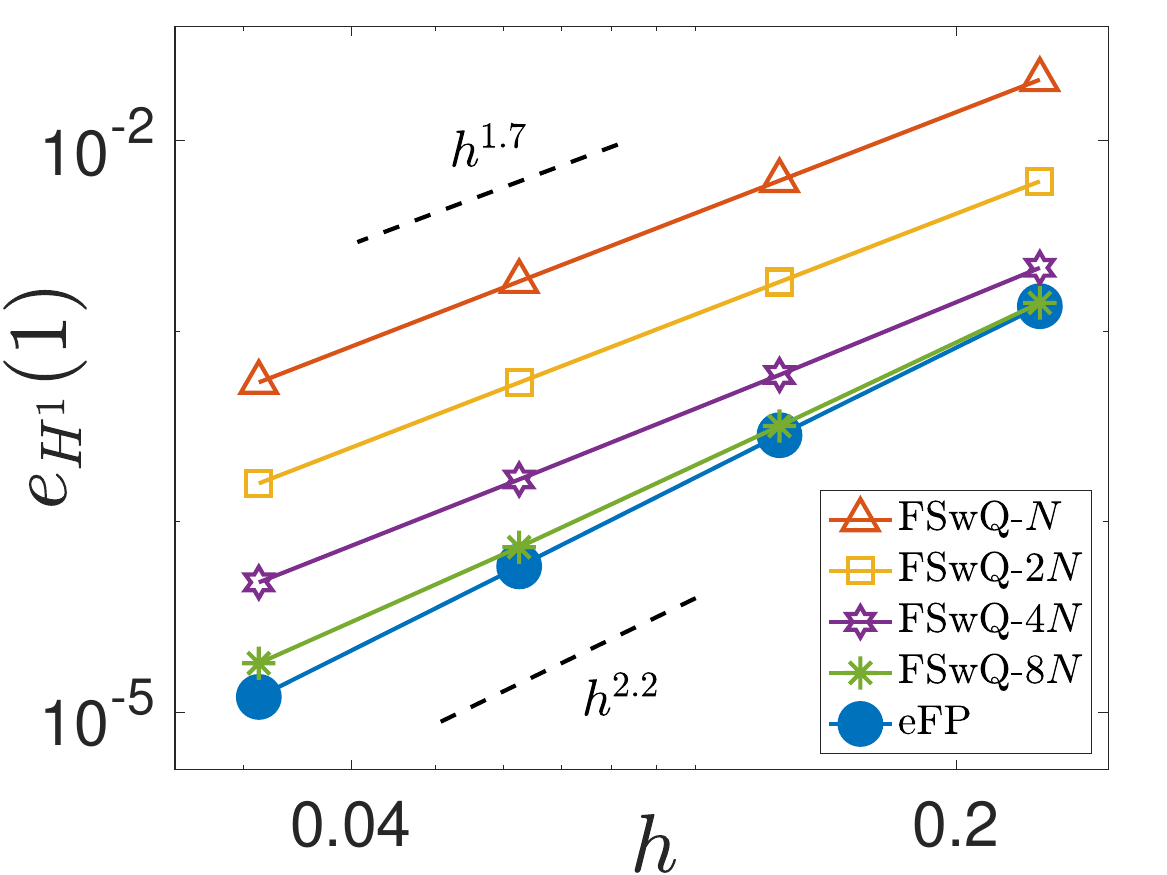}}
		\caption{Spatial errors in $L^2$- and $H^1$-norm of the eFP method for the GPE \eqref{NLSE} with $W^{1, 4}$-potential $V = V_2$.}
		\label{fig:W14_poten}
	\end{figure}
	
	\begin{figure}[htbp]
		\centering
		{\includegraphics[height=\figheight\textheight]{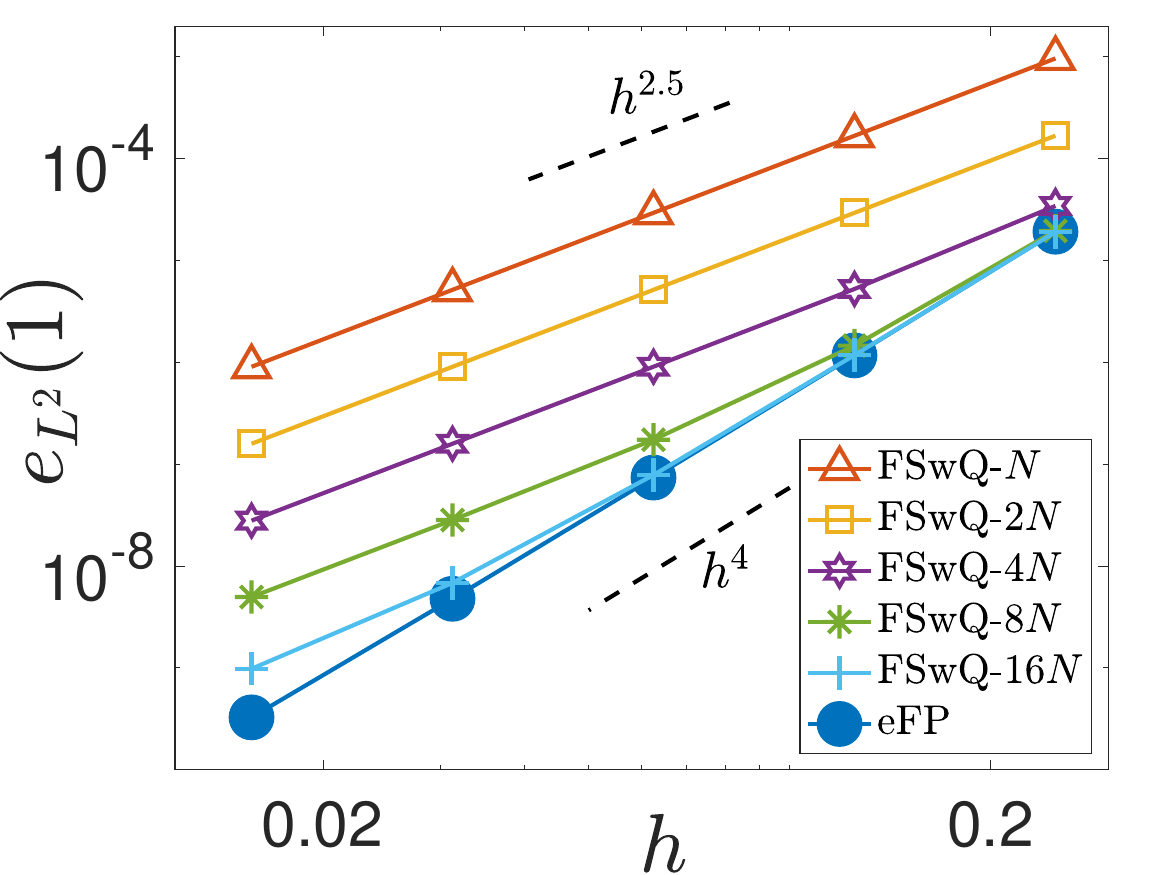}}\hspace{1em}
		{\includegraphics[height=\figheight\textheight]{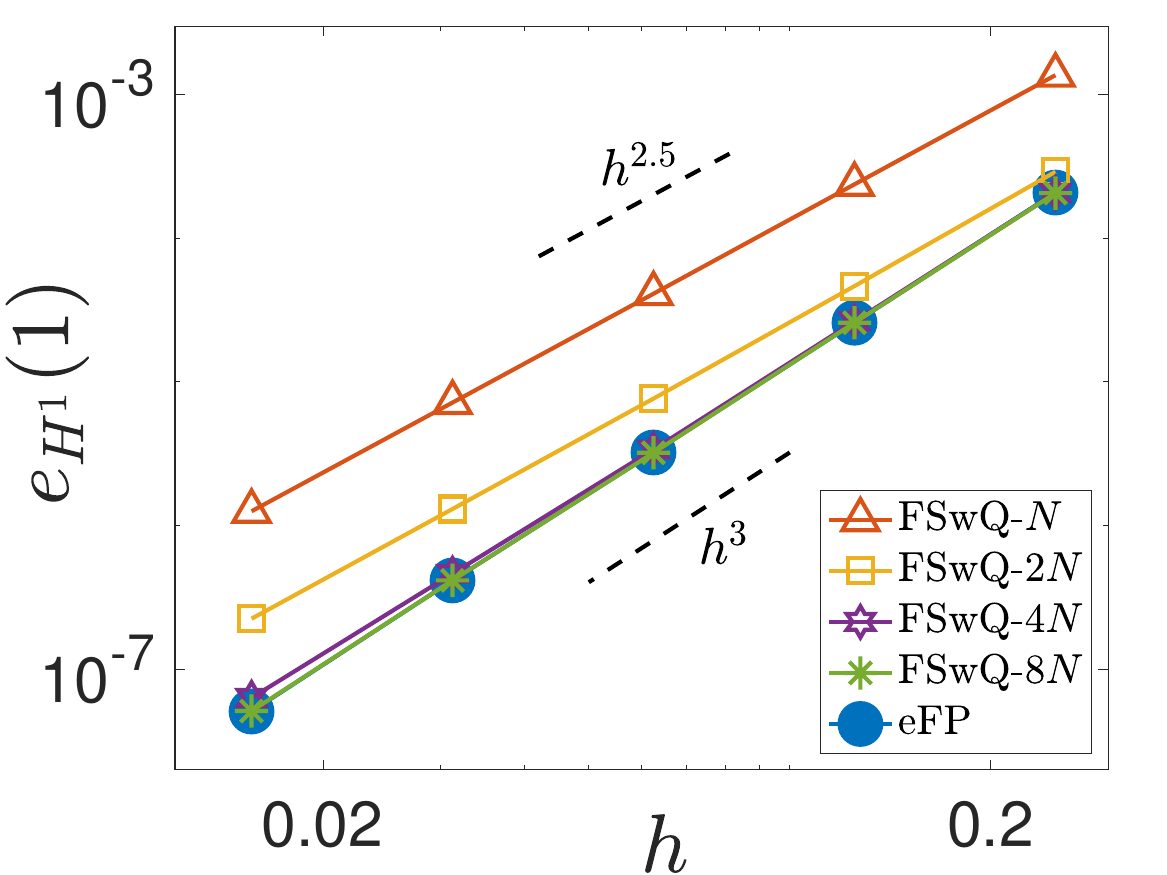}}
		\caption{Spatial errors in $L^2$- and $H^1$-norm of the eFP method for the GPE \eqref{NLSE} with $H^2$-potential $V = V_3$.}
		\label{fig:H2_poten}
	\end{figure}
	
	\begin{figure}[htbp]
		\centering
		{\includegraphics[height=\figheight\textheight]{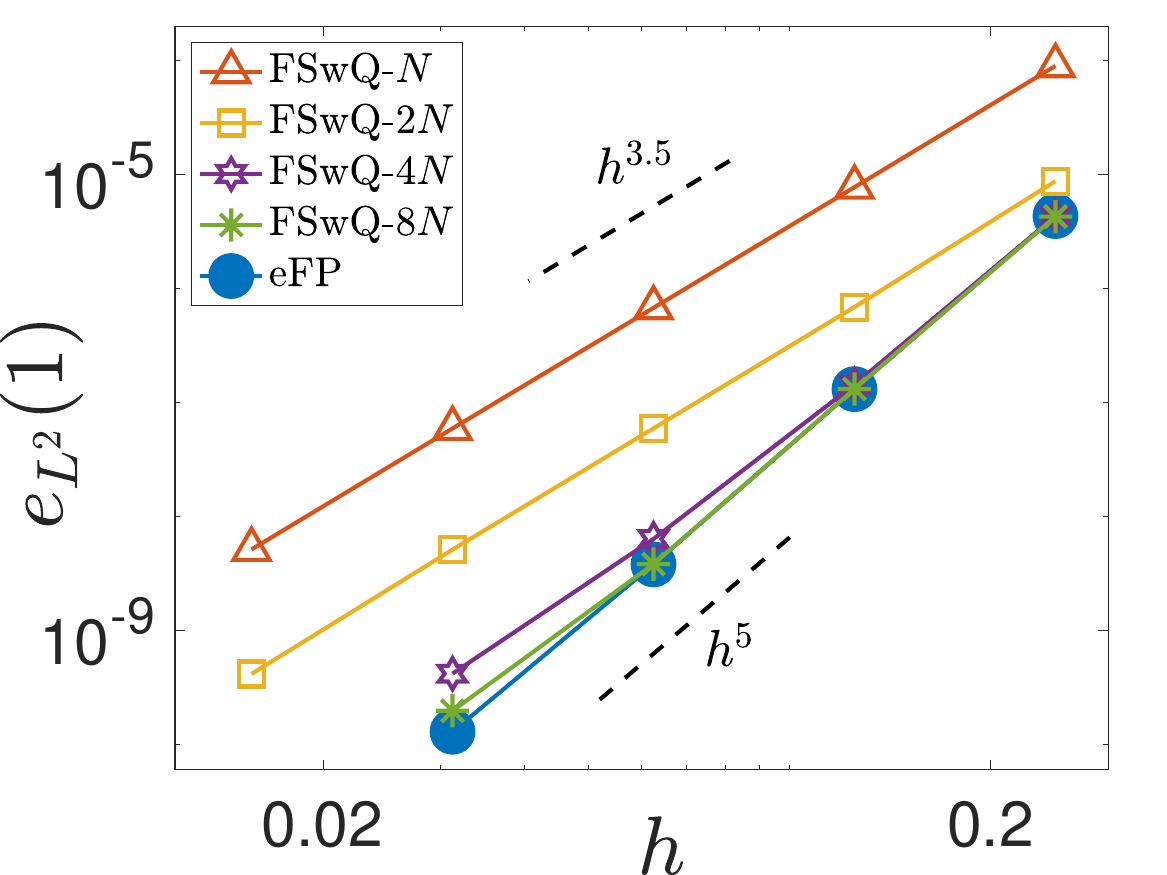}}\hspace{1em}
		{\includegraphics[height=\figheight\textheight]{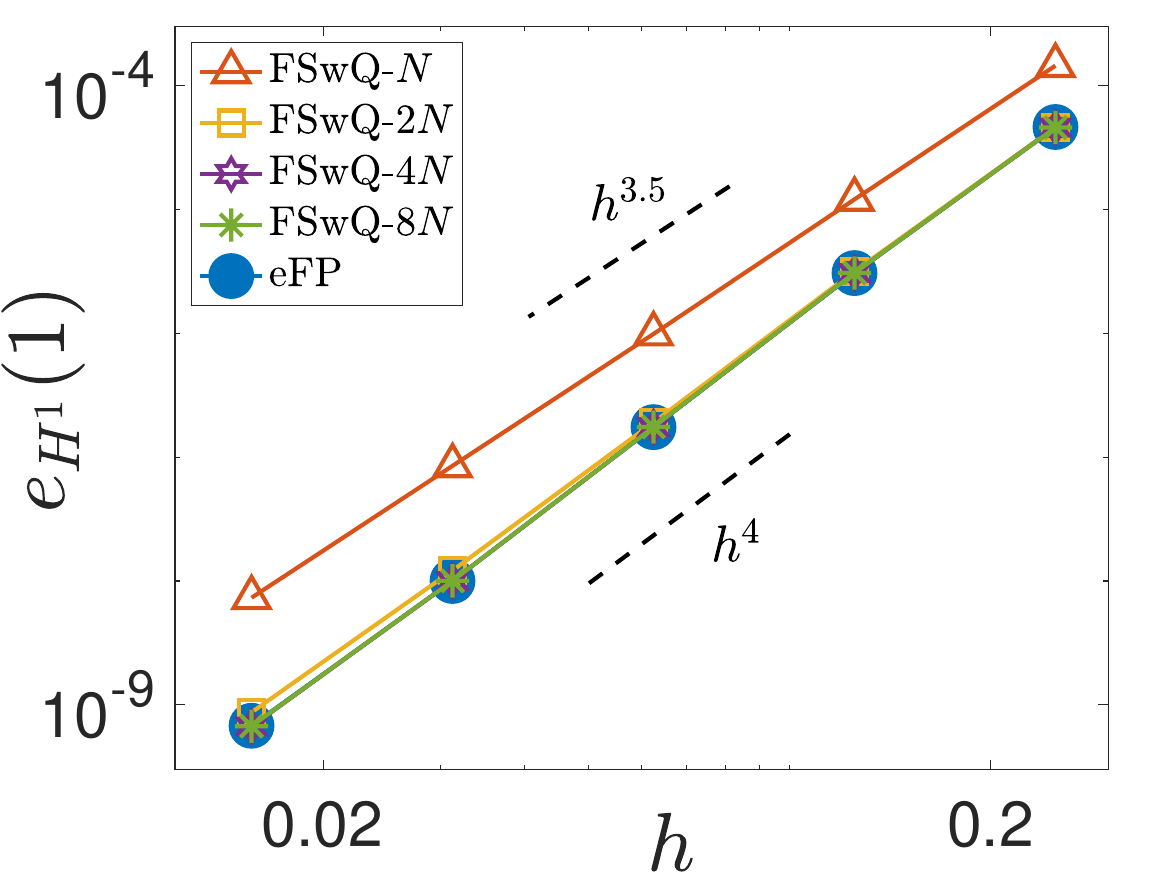}}
		\caption{Spatial errors in $L^2$- and $H^1$-norm of the eFP method for the GPE \eqref{NLSE} with $H^3$-potential $V = V_4$.}
		\label{fig:H3_poten}
	\end{figure}
	
	In summary, the eFP method can achieve optimal spatial convergence with respect to the regularity of the exact solution, which confirms our error estimates in Theorems \ref{thm:LTEFP} and \ref{thm:STEFP} as well as Remark \ref{rem:spatial_error}. It surpasses the FSwQ method, with the Fourier pseudospectral method as a particular case, for both low and high regularity potential. While its superiority is most evident in the cases of low regularity potential.   
	
	\subsection{Temporal errors}
	In this subsection, we shall test the temporal convergence orders of the LTeFP method \eqref{LTEFP_scheme} and the STeFP method \eqref{STEFP_scheme} for the GPE \eqref{NLSE} with different potential $V = V_j \ (j = 1, 2, 3, 4)$. To demonstrate that the time step size restriction $\tau \leq h^2/\pi$ is necessary and optimal, we show numerical results obtained with $\tau = C h^\gamma$ for different $C$ and $\gamma$. 
	
	We start with the LTeFP method and choose $V = V_1$ and $V=V_2$ for optimal first-order $L^2$- and $H^1$-norm error bounds, respectively. The numerical results are shown in Figure \ref{fig:LT_poten}, where the errors are computed with (i) $\tau = 0.2h$, (ii) $\tau = 0.4h^{1.5}$, (iii) $\tau = 0.8h^2$ and (iv) $\tau=0.2h^2$. Note that, from (iv) to (i), we are refining the mesh; however, as we shall present in the following, refining the mesh will significantly increase the temporal error and also the overall error. 
	
	From (a1) and (a2) in Figure \ref{fig:LT_poten}, we see that the expected temporal convergence orders (i.e. first order in $L^2$-norm and half order in $H^1$-norm) proved in \eqref{eq:LT_L2} can only be observed when $\tau \leq h^2/\pi$ (corresponding to $\tau = 0.2h^2$). There is order reduction for other choices of $\tau$ and $h$. In particular, for the common choice of $\tau \sim h$, there is no convergence in $H^1$-norm. Similarly, from (b) in Figure \ref{fig:LT_poten}, the optimal first-order convergence in $H^1$-norm proved in \eqref{eq:LT_H1} can only be observed when $\tau \leq h^2/\pi$. These observations confirm our error bounds in Theorem \ref{thm:LTEFP} for the GPE with low regularity potential and indicate that the time step size restriction $\tau \leq h^2/\pi$ is necessary and optimal. 
	\begin{figure}[htbp]
		\centering
		{\includegraphics[height=\figheight\textheight]{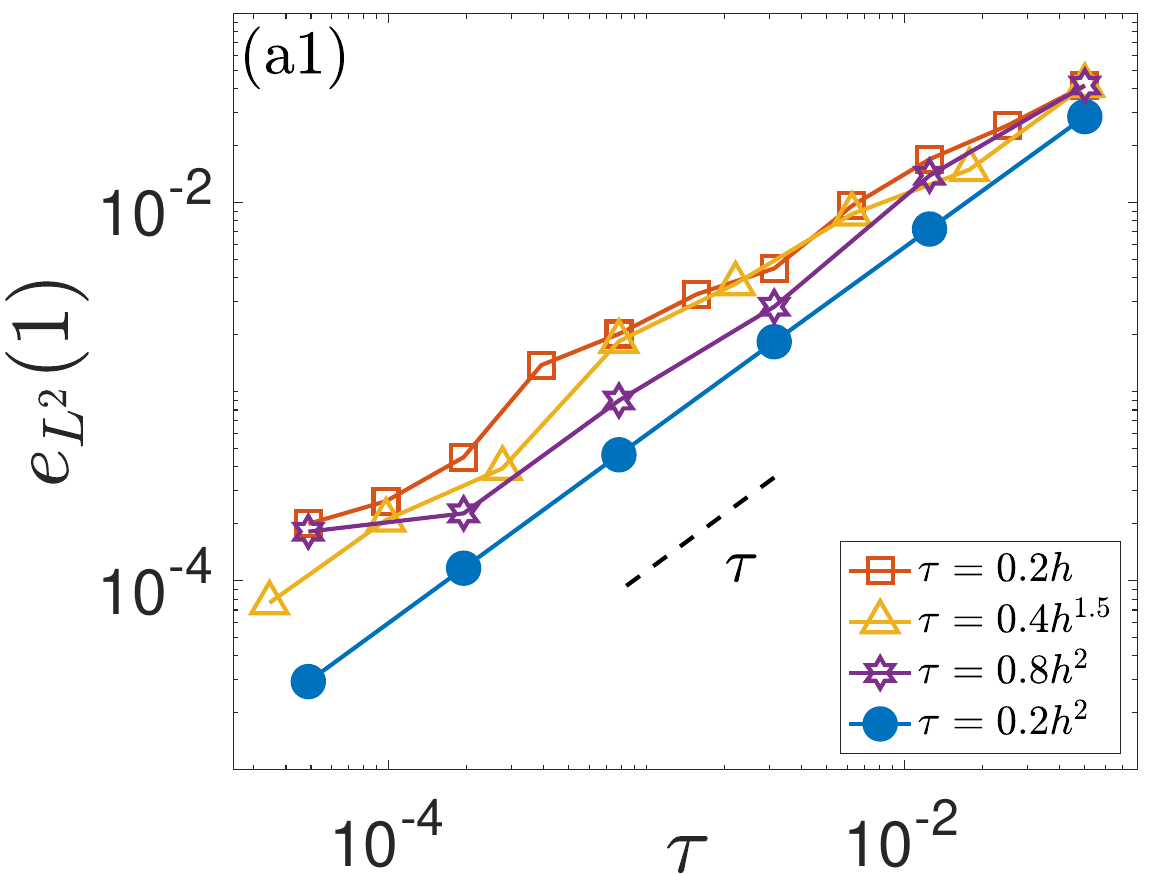}}\hspace{1em}
		{\includegraphics[height=\figheight\textheight]{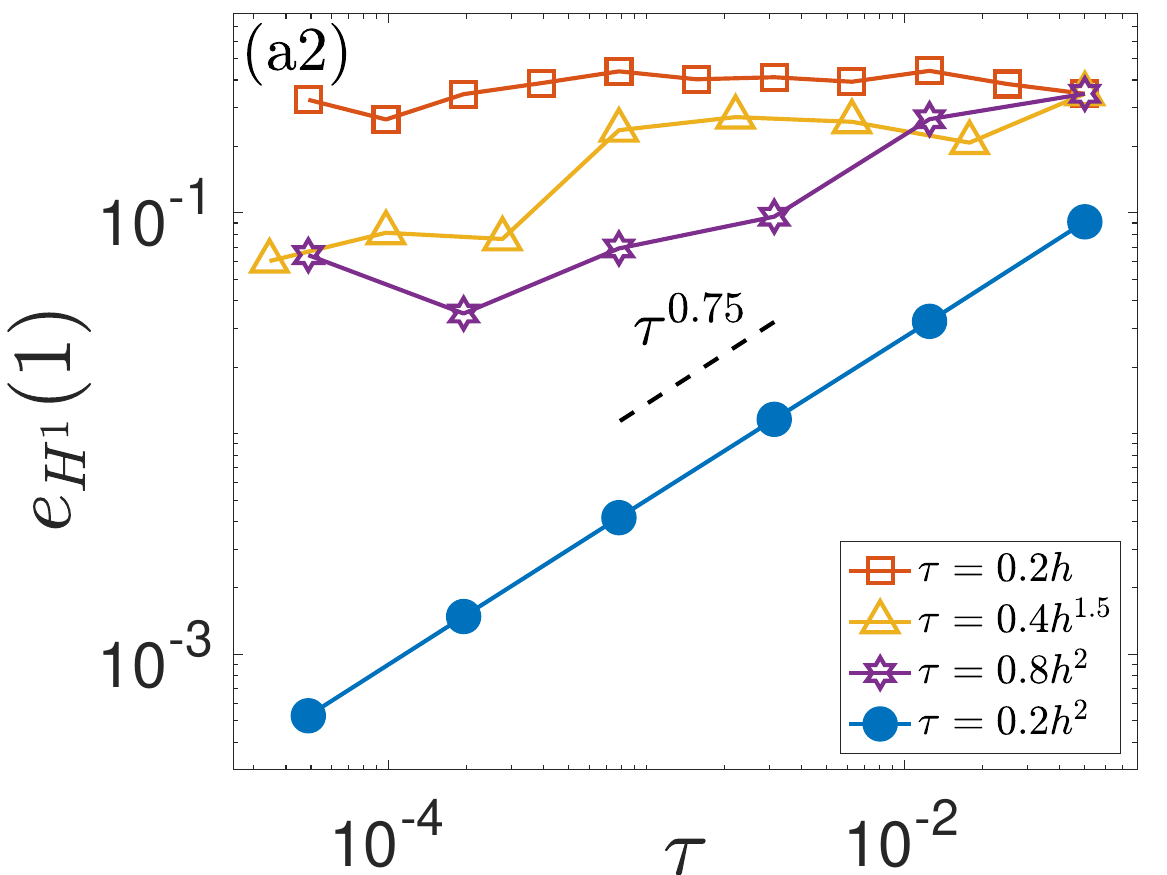}}\\
		{\includegraphics[height=\figheight\textheight]{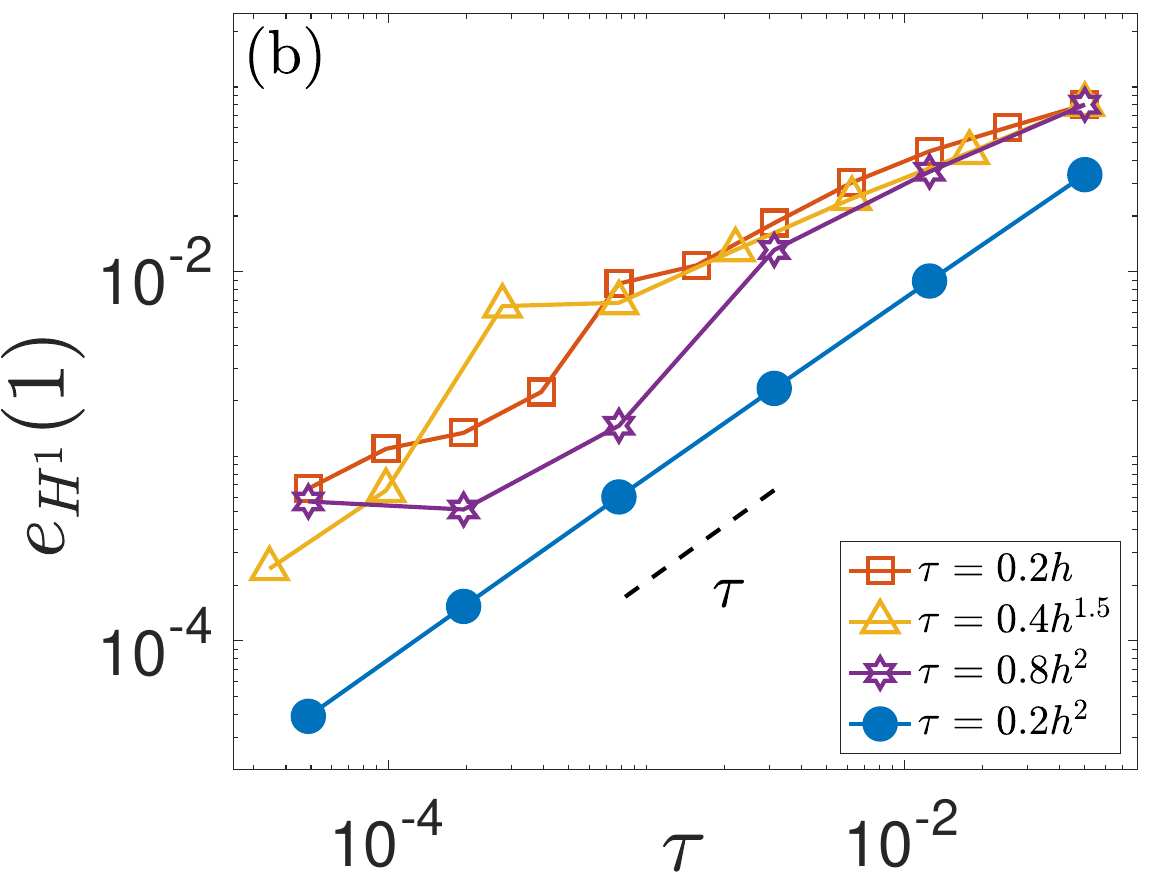}}
		\caption{Temporal errors of the LTeFP method for the GPE \eqref{NLSE} with (a) $L^\infty$-potential $V=V_1$ and (b) $W^{1, 4}$-potential $V = V_2$. }
		\label{fig:LT_poten}
	\end{figure}
	
	Then we showcase the results using the STeFP method for $V = V_3$ and $V = V_4$, aiming for optimal second-order error bounds in the $L^2$- and $H^1$-norms, respectively. The numerical findings are depicted in Figure \ref{fig:ST_poten} with consistent choices for $\tau$ and $h$, namely, $\tau = 0.2h$, $\tau = 0.4h^{1.5}$, $\tau = 0.8h^2$, and $\tau=0.2h^2$. Observations from these results align closely with those from the LTeFP method, validating our optimal error bounds in Theorem \ref{thm:STEFP} and suggesting that the time step size restriction $\tau \leq h^2/\pi$ is necessary and optimal. 
	
	\begin{figure}[htbp]
		\centering
		{\includegraphics[height=\figheight\textheight]{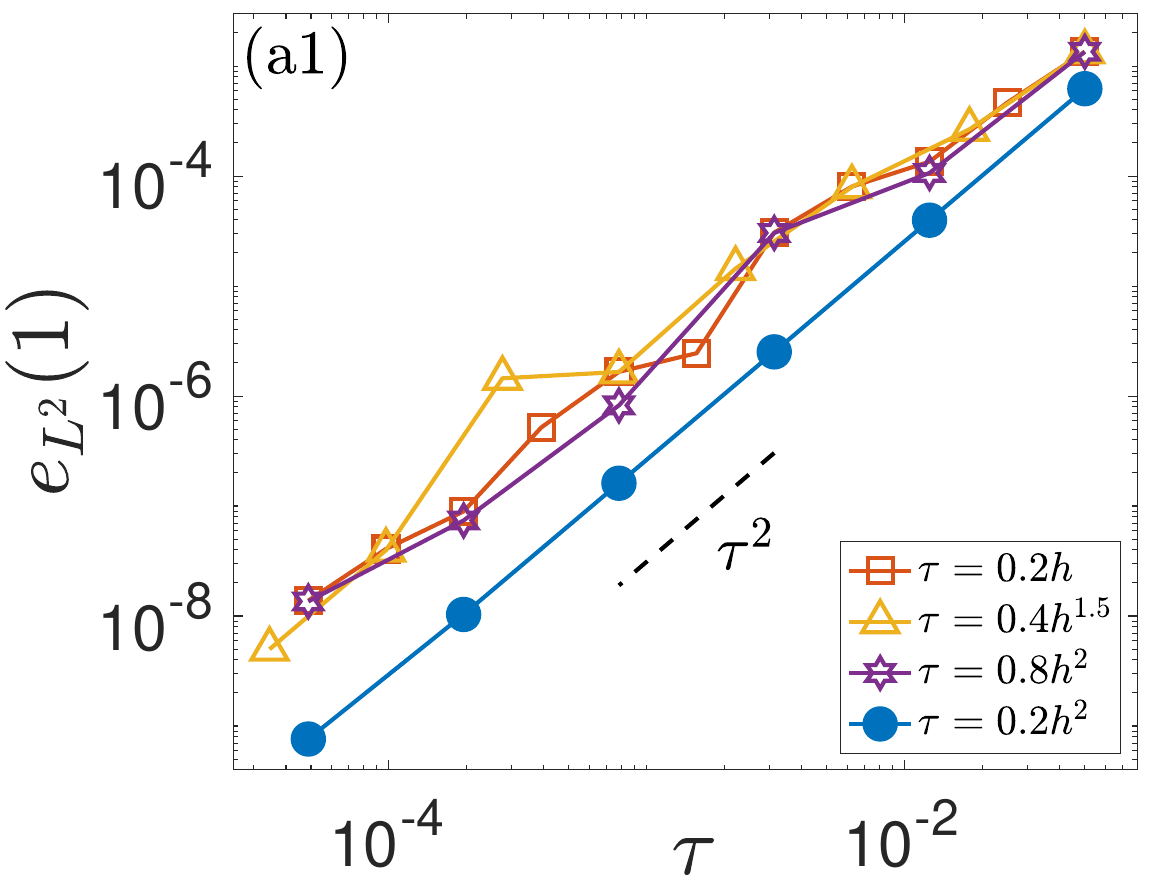}}\hspace{1em}
		{\includegraphics[height=\figheight\textheight]{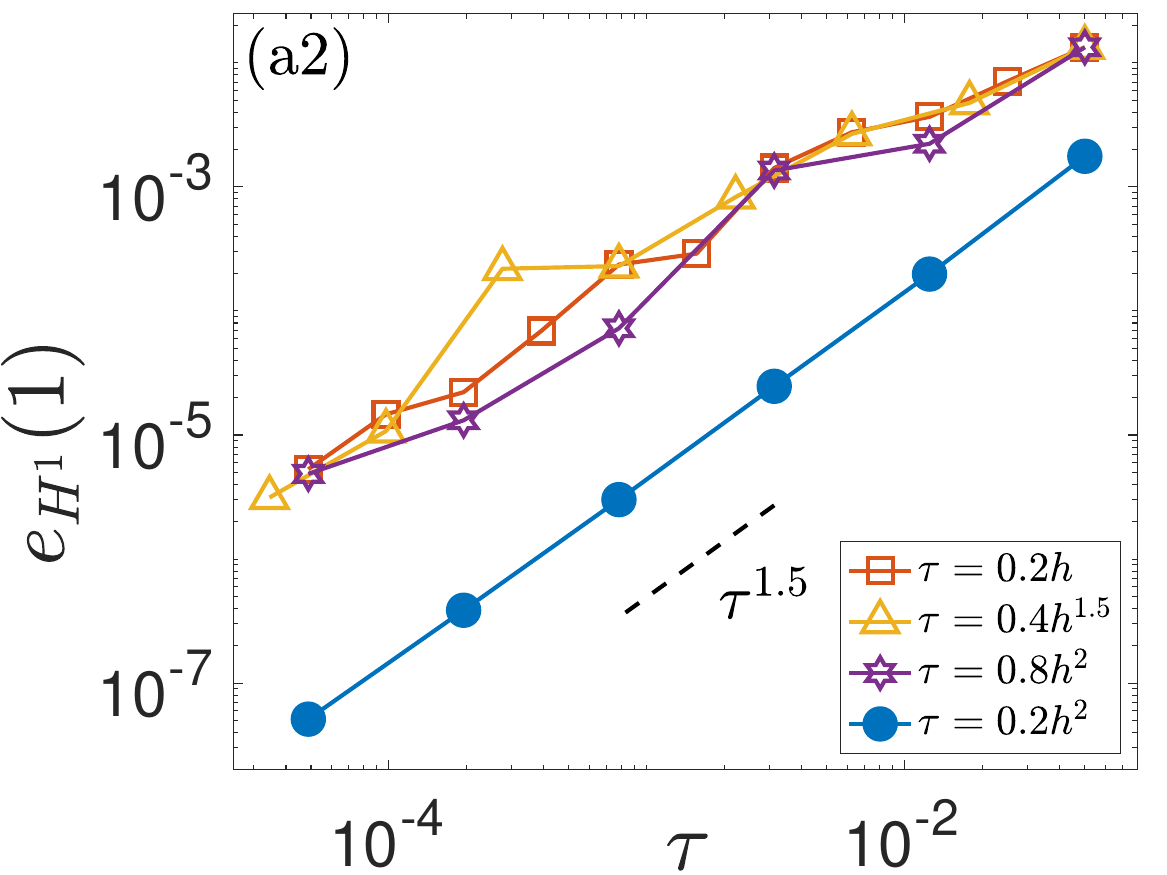}}\\
		{\includegraphics[height=\figheight\textheight]{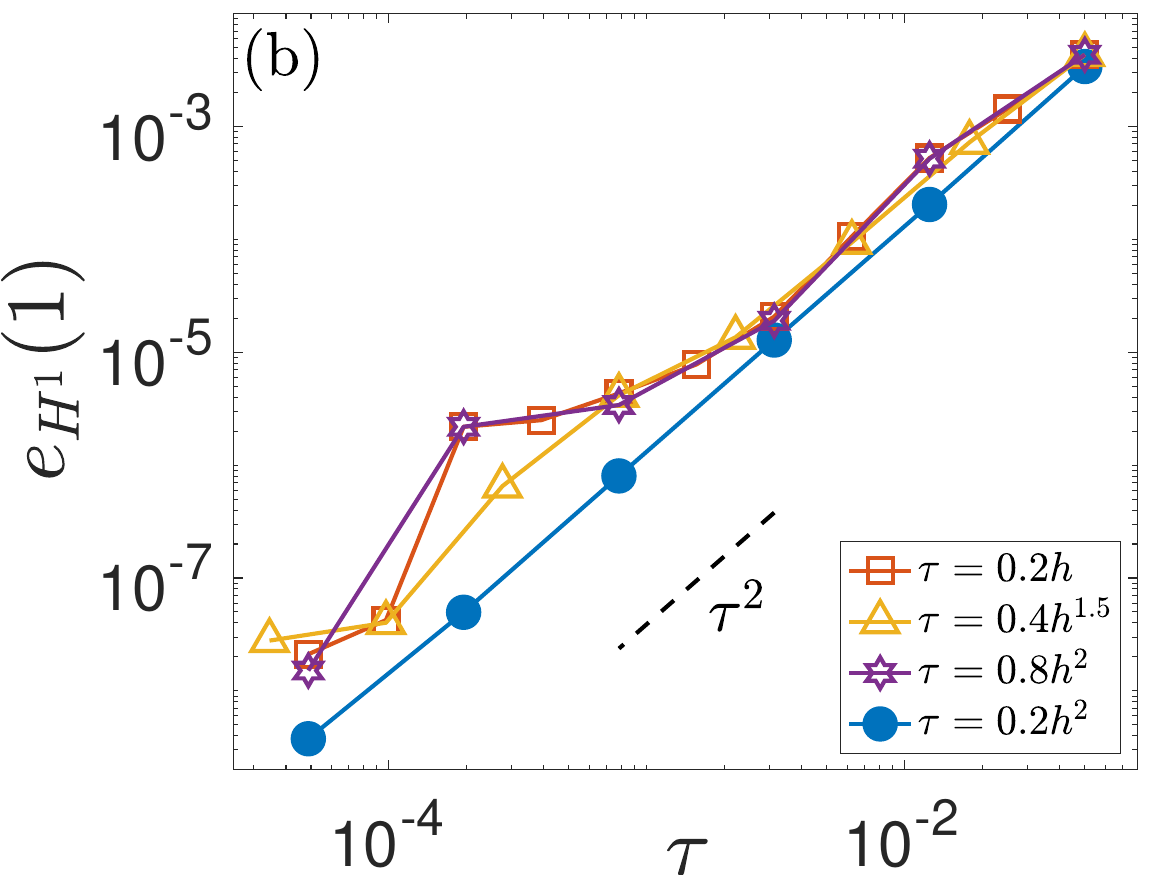}}
		\caption{Temporal errors of the STeFP method for the GPE \eqref{NLSE} with (a) $H^2$-potential $V=V_3$ and (b) $H^3$-potential $V = V_4$. }
		\label{fig:ST_poten}
	\end{figure}
	
	\section{Conclusion}
	We proposed and analyzed an extended Fourier pseudospectral (eFP) method for the spatial discretization of the Gross-Pitaevskii equation with low regularity potential. We also rigorously established error estimates for the fully discrete scheme obtained by combining the eFP method with time-splitting methods. The eFP method is accurate and efficient: it maintains optimal approximation rates with computational cost almost the same as the standard Fourier pseudospectral method. Furthermore, instead of time-splitting methods, it can be coupled with various kinds of temporal discretizations such as finite difference methods and exponential-type integrators, showing its high flexibility. 
	
	\section*{Acknowledgments}
	The work is partially supported by the Ministry of Education of Singapore under its AcRF Tier 2 funding MOE-T2EP20122-0002 (A-8000962-00-00). 
	

\end{document}